\DeclareFontFamily{OT1}{rsfs}{}
\DeclareFontShape{OT1}{rsfs}{n}{it}{<-> rsfs10}{}
\DeclareMathAlphabet{\mathscr}{OT1}{rsfs}{n}{it}
\newcommand{\la}{\mathfrak{p}}
\DeclareMathOperator{\sgn}{sgn}
\let\@wraptoccontribs\wraptoccontribs
\begin{document}
\theoremstyle{plain}

\newtheorem{theorem}{Theorem}[section]
\newtheorem{thm}[equation]{Theorem}
\newtheorem{prop}[equation]{Proposition}
\newtheorem{cor}[equation]{Corollary}
\newtheorem{conj}[equation]{Conjecture}
\newtheorem{lemma}[equation]{Lemma}
\newtheorem{definition}[equation]{Definition}
\newtheorem{question}[equation]{Question}
\theoremstyle{definition}
\newtheorem{remark}[equation]{Remark}
\newtheorem{example}[equation]{Example}
\numberwithin{equation}{section}

\newcommand{\Hecke}{\mathcal{H}}
\newcommand{\Liea}{\mathfrak{a}}
\newcommand{\Cmg}{C_{\mathrm{mg}}}
\newcommand{\Cinftyumg}{C^{\infty}_{\mathrm{umg}}}
\newcommand{\Cfd}{C_{\mathrm{fd}}}
\newcommand{\Cinftyfd}{C^{\infty}_{\mathrm{ufd}}}
\newcommand{\sspace}{\Gamma \backslash G}
\newcommand{\Sym}{\mathbb{S}}

\newcommand{\PP}{\mathcal{P}}
\newcommand{\bfP}{\mathbf{P}}
\newcommand{\bfQ}{\mathbf{Q}}
\newcommand{\Siegel}{\mathfrak{S}}
\newcommand{\g}{\mathfrak{g}}
\newcommand{\A}{{\rm A}}
\newcommand{\B}{{\rm B}}
\newcommand{\Q}{\mathbb{Q}}
\newcommand{\Gm}{\mathbb{G}_m}
\newcommand{\kk}{\mathfrak{k}}
\newcommand{\nn}{\mathfrak{n}}
\newcommand{\tF}{\tilde{F}}
\newcommand{\p}{\mathfrak{p}}
\newcommand{\m}{\mathfrak{m}}
\newcommand{\bb}{\mathfrak{b}}
\newcommand{\Ad}{{\rm Ad}\,}
\newcommand{\ttt}{\mathfrak{t}}
\newcommand{\frakt}{\mathfrak{t}}
\newcommand{\U}{\mathcal{U}}
\newcommand{\Z}{\mathbb{Z}}
\newcommand{\bfG}{\mathbf{G}}
\newcommand{\bfT}{\mathbf{T}}
\newcommand{\R}{\mathbb{R}}
\newcommand{\ST}{\mathbb{S}}
\newcommand{\h}{\mathfrak{h}}
\newcommand{\bC}{\mathbb{C}}
\newcommand{\C}{\mathbb{C}}
\newcommand{\E}{\mathbb{E}}
\newcommand{\F}{\mathbb{F}}
\newcommand{\N}{\mathbb{N}}
\newcommand{\qH}{\mathbb {H}}
\newcommand{\temp}{{\rm temp}}
\newcommand{\Hom}{{\rm Hom}}
\newcommand{\Aut}{{\rm Aut}}
\newcommand{\Ext}{{\rm Ext}}
\newcommand{\Nm}{{\rm Nm}}
\newcommand{\End}{{\rm End}\,}
\newcommand{\Ind}{{\rm Ind}\,}
\def\circG{{\,^\circ G}}
\def\M{{\rm M}}
\def\diag{{\rm diag}}
\def\Ad{{\rm Ad}}
\def\G{{\rm G}}
\def\H{{\rm H}}
\def\SL{{\rm SL}}
\def\PSL{{\rm PSL}}
\def\GSp{{\rm GSp}}
\def\PGSp{{\rm PGSp}}
\def\Sp{{\rm Sp}}
\def\St{{\rm St}}
\def\GU{{\rm GU}}
\def\SU{{\rm SU}}
\def\U{{\rm U}}
\def\GO{{\rm GO}}
\def\GL{{\rm GL}}
\def\PGL{{\rm PGL}}
\def\GSO{{\rm GSO}}
\def\Gal{{\rm Gal}}
\def\SO{{\rm SO}}
\def\O{{\rm  O}}
\def\sym{{\rm sym}}
\def\St{{\rm St}}
\def\tr{{\rm tr\,}}
\def\ad{{\rm ad\, }}
\def\Ad{{\rm Ad\, }}
\def\rank{{\rm rank\,}}

\subjclass{Primary 11F70; Secondary 22E55}

\title[A character relationship between symmetric group and hyperoctahedral group]
{      A character relationship between symmetric group and hyperoctahedral group  }

\author{Frank L\"ubeck and Dipendra Prasad}

\contrib[with an appendix by]{Arvind Ayyer}

\address{Lehrstuhl D f\"ur Mathematik, Pontdriesch 14/16, 52062 Aachen, Germany}
\email{frank.luebeck@math.rwth-aachen.de}

\address{Indian Institute of Technology Bombay, Powai, Mumbai-400076} 
\address{Tata Institute of Fundamental
Research, Colaba, Mumbai-400005.}
\email{prasad.dipendra@gmail.com}

\address{Indian Institute of Science, Bengaluru-560012}
\email{arvind@iisc.ac.in}
\begin{abstract} {We relate character theory  of  the symmetric groups $\Sym_{2n}$ and $\Sym_{2n+1}$ with that of
  the hyperoctahedral group $B_n = (\Z/2)^n \rtimes \Sym_n$, as part of the expectation that the character theory of
  reductive groups with diagram automorphism and their Weyl groups, is related to the character theory of the fixed
  subgroup of the diagram automorphism.} \end{abstract}

\maketitle
    {\hfill \today}
    
\tableofcontents

\section{Introduction} Let $G$ be either the symmetric group $\Sym_{2n}$ or the symmetric group $\Sym_{2n+1}$, and $H$ the hyperoctahedral group $B_n = (\Z/2)^n \rtimes \Sym_n$,
sitting naturally inside $G$ ($B_n \subset \Sym_{2n} \subset \Sym_{2n+1}$)
as the centralizer of a fixed point free involution $w_0$ in $\Sym_{2n}$. In this paper, we take
the symmetric group $\Sym_{2n}$ as acting on the set $\{\pm 1,\pm 2,\cdots, \pm n \}$ of cardinality $2n$,
and the symmetric group $\Sym_{2n+1}$ as acting on the set $\{0, \pm 1,\pm 2,\cdots, \pm n \}$ of cardinality $2n+1$.
We fix $w_0= (1,-1)(2,-2)\cdots  (n, -n)$.
The paper proves a character
relationship between the irreducible representations of the groups $G$ and $B_n$, closely related to character identities available between (finite dimensional, irreducible,
algebraic) representations of the groups $\GL_{2n}(\C)$, or $\GL_{2n+1}(\C)$ which are self-dual, i.e., invariant under the involution
$g \rightarrow {}^tg^{-1}$, with (finite dimensional, irreducible,
algebraic) representations of the groups $\SO_{2n+1}(\C)$, or $\Sp_{2n}(\C)$, 
for which we refer to \cite{KLP}. These character identities are classically known
as Shintani character identities, first observed between representations of $\GL_n(\F_q)$ and $\GL_n(\F_{q^d})$, cf. \cite{Sh}, although for the case at hand, it
would be much closer to consider irreducible unipotent representations of  say $\U_{2n}(\F_q)$ corresponding to an irreducible
representation of the Weyl group $B_n$ and the associated basechanged representation of $\GL_{2n}(\F_{q^2})$ associated to a representation
of the Weyl group $\Sym_{2n}$, and which are
related by a basechange character identity, cf. \cite{Ka}, \cite{D}.

Observe that in all the cases above, the group $G$ comes equipped with an automorphism, call it
$j$, of finite order
(such as conjugation by $w_0$
for symmetric groups), and $H$ is either the subgroup of fixed points of this automorphism, or is closely related to the subgroup of fixed points
(through a dual group construction as in \cite{KLP}), and in the theory of basechange,
it is not so much the representation theory of the group $G$ which is important, but rather it is the representation theory of the group $G \rtimes \langle j \rangle$. 
Basic to character identities in all the cases referred above, are two maps (correspondence in general):

\begin{enumerate}
\item A correspondence $BC$ between irreducible representations of $H$ and  irreducible representations of $G$,
  to be called basechange of representations
  of $H$ to representations of $G$, which in all the cases listed above is an injective map from
  irreducible representations of $H$ to  irreducible representations of $G$ (but not surjective except in some trivial cases).

\item  A correspondence $\Nm$ between  (certain) $j$-twisted-conjugacy classes in $G$ (two elements $g_1,g_2 $ in $G$ are said to be $j$-twisted-conjugate if there exists $g\in G$ such that
  $g_1 = gg_2jg^{-1}j^{-1}$) and  (certain) conjugacy classes in $H$, to be called the norm mapping from $G$ to $H$, and denoted $\Nm: G\rightarrow H$.

  \end{enumerate}

In the various examples mentioned above, it is the map
$BC$ between irreducible representations of $H$ and  irreducible representations of $G$ that is non-trivial to define or construct, whereas the norm map
$\Nm: G\rightarrow H$ is usually straightforward to define, and is defined on {\it all} of $G$. However, in the case at hand of symmetric groups,
norm mapping is also not so trivial to define especially as it is not defined on all of $G$.

The main theorem of this work, Theorem \ref{main}, proves the following character relation:
$$\Theta(BC(\pi))(w) = \epsilon(BC (\pi))
    \Theta(\pi)(\Nm w),$$
    for any irreducible representation $\pi$ of $B_n$, and $w \in  G$ which is a product of even cycles with at most one fixed
    point    (where    $ \epsilon(BC (\pi)) = \pm 1$).
        In particular,
    taking $w=w_0$, we have 
    $$\Theta(BC(\pi))(w_0) = \epsilon(BC (\pi))   \Theta(\pi)(1),$$
    which was at the origin of this work. After the completion of this work, Prof. G. Lusztig informed the authors that this
    special case occurs on page 110 of his paper \cite{Lu}.

The above theorem was arrived at  by computations done via the GAP software~\cite{GAP}, and inspired by the hope that
basechange character
identities available in many situations involving reductive groups, also have an analogue for Weyl groups of these algebraic groups. We eventually found that it is a simple consequence of Theorem \ref{little} due to Littlewood in \cite {Li1}
for even symmetric groups. Along the way, we have given a complete proof of this theorem of Littlewood both for $\Sym_{2n}$ as well as $\Sym_{2n+1}$. It is surprising 
that the desired character identity relating symmetric groups and     hyperoctahedral group is proved via Frobenius character formula for symmetric groups which is a form of Schur-Weyl duality by a ``factorization'' of the character formula (i.e., Schur polynomials)
for irreducible representations of $\GL_{2n}(\C)$ on special elements discovered by the second author in \cite{P}, which is already
there in \cite{Li1} written in 1940!

    \section{The Norm mapping}

    The aim of this section is to define a map from certain conjugacy classes in $G$ to conjugacy classes in $H$
 where $G$ is either the symmetric group $\Sym_{2n}$ or the symmetric group $\Sym_{2n+1}$, and $H$ is the  hyperoctahedral group $B_n = (\Z/2)^n \rtimes \Sym_n$,
 sitting naturally inside $G$ as the centralizer of the fixed point free involution $w_0= (1,-1)(2,-2)\cdots  (n, -n)$. We will call
 this map the norm mapping and denote it as $\Nm: G\rightarrow H$ even though it is not defined on all of $G$.
 
 Dealing with both the groups $\Sym_{2n}$ and $\Sym_{2n+1}$ at the same time has some notational issues, so  
 we first deal with $G=\Sym_{2n}$, and then come back to $G=\Sym_{2n+1}$ at the end. 
We let  $B_n = (\Z/2)^n \rtimes \Sym_n$ be defined more concretely as:
$$B_n = (\Z/2)^n \rtimes \Sym_n = \{ h \in \Aut \{\pm 1, \pm 2 \cdots, \pm n\} | h(-i)=-h(i), \hspace{.5cm}  \forall i \},$$
with $\Sym_n \subset B_n$ be the subgroup of permutations $\sigma$ of $  \{ 1,  2 \cdots,  n\}$ extended to the points $\{-1,\ldots, -n\}$ by
$\sigma(-i)= -\sigma(i)$.

 We begin by recalling that conjugacy classes in the symmetric group $\Sym_m$ are in bijective correspondence with partitions $\p$  of $m$, which are collection
 of nonzero integers $\{p_1 \geq p_2\geq \cdots \geq p_r \geq 1\}$.

 Conjugacy classes in $B_n$
 are also well-understood beginning with the observation that there are two $B_n$-conjugacy classes among elements of $B_n$ which
 project to an $n$-cycle, say $s_n=(123\cdots n)$, in $\Sym_n$. If $a\cdot s_n \in (\Z/2)^n \rtimes \Sym_n$ with  $a \in (\Z/2)^n$,
 then the $B_n$-conjugacy class of $a\cdot s_n$ is determined by ${\rm tr}(a) \in \Z/2$ where $a\rightarrow {\rm tr}(a)$
 is the homomorphism ${\rm tr} : (\Z/2)^n \rightarrow \Z/2$ which is adding-up of the co-ordinates of $a$. Thus one may call,
 the  two $B_n$-conjugacy classes among elements of $B_n$ which
 project to an $n$-cycle in $\Sym_n$ as positive $n$-cycle and negative $n$-cycle depending on whether ${\rm tr}(a)=0$ or  ${\rm tr}(a)=1$. These two conjugacy classes
 can also be differentiated by the order of their elements which is $n$ for a positive $n$-cycle, and is $2n$ for a negative $n$-cycle.

 It follows that  conjugacy classes in  $B_n = (\Z/2)^n \rtimes \Sym_n$ are in bijective correspondence with (ordered pairs of) partitions $\{\p_0, \p_1\}$
of $n$ (to be called a bi-partition, or a pair of partitions of $n$), i.e.,  with  $|\p_0| + |\p_1|=n$, with positive $p_i^0$ cycles and negative $p_i^1$ cycles where $p_i^0$ (resp. $p_i^1$) are the parts of the   partition
$\p_0$ (resp. $\p_1$).

The natural map from the set of conjugacy classes in $B_n$ to the set of conjugacy classes in $\Sym_{2n}$ takes the conjugacy class
corresponding to a bi-partition $\{\p_0, \p_1\}$
of $n$ to the conjugacy class in $\Sym_{2n}$ corresponding to the partition $\{\p_0,\p_0,
2\p_1\}$
of $2n$ where  $2\p_1$ stands for the partition in which each component of
$\p_1$ is multiplied by 2.

As a consequence of the above description of conjugacy classes in $B_n \subset \Sym_{2n}$, observe that the natural mapping from the set of conjugacy classes  in $B_n$ to the set of
conjugacy classes in $\Sym_{2n}$  is not
one-to-one. For example, the conjugacy class of the fixed point free involution $w_0$ in $\Sym_{2n}$
intersects with exactly with
$[n/2]+1$ conjugacy classes in $B_n$ (where $[x]$ denotes the integral part of a real number $x$)
represented by the bi-partitions  $\{\underline{p_d}^+, \underline{p_d}^-\}$ where  $\underline{p_d}^+$ is the partition $ \{2,2,\cdots, 2\}$ of $2d \leq n$, and  $\underline{p_d}^-$ is the trivial partition
$\{1,1,\cdots,1\}$ of $(n-2d)$.

\begin{definition}(Norm mapping) The norm mapping from the set of conjugacy classes in $\Sym_{2n}$ to
  the set of conjugacy classes in $B_n = (\Z/2)^n \rtimes \Sym_n$ is defined on those
   conjugacy classes $g$ in $\Sym_{2n}$ represented only by even cycles $ \{2p_1 \geq 2p_2\geq \cdots \geq 2p_r \geq 2\}$
   (these conjugacy classes in particular have no fixed points under the action of $\Sym_{2n}$ on $\{\pm 1, \pm 2,\cdots, \pm n\}$)
     which are
     mapped to $\Nm(g)=h$, a conjugacy class in  $B_n = (\Z/2)^n \rtimes \Sym_n$ corresponding to the bi-partition $ \p = ( \p_0,  \p_1)$ of $n$ 
     having only positive cycles, i.e., $ \p_1$ is empty, and
     $\p_0 =  \{p_1 \geq p_2\geq \cdots \geq p_r \geq 1\}$.

     The norm mapping from $\Sym_{2n+1}$ to $B_n = (\Z/2)^n \rtimes \Sym_n$ is defined on the set of those
   conjugacy classes $g$ in $\Sym_{2n+1}$ which come from $\Sym_{2n}$, and for them the norm mapping is the one just defined for $\Sym_{2n}$ above.
     
 \end{definition}

 \begin{remark}
   There is another way to define the norm mapping  which relates to more
   conventional way norm mappings are defined in questions about basechange character identities (say, relating twisted characters for $G(\F_{q^d})$ to characters of $G(\F_q)$),
   which we briefly discuss now. 
   Recall that there is a natural mapping from
representations $\pi$ of $\Sym_{2n}$ to representations $\pi'$ of
the semidirect product group $\Sym_{2n}\rtimes \langle w_0\rangle$ in which the underlying
vector space for $\pi'$ is the same as that for $\pi$, and
for which   $w_0\in \langle w_0\rangle$
operates on $\pi'$ as  $w_0$ does on $\pi$. Under this identification
of representations of $\Sym_{2n}$ with one of
$\Sym_{2n}\rtimes \langle w_0\rangle$, the { character}
of $\pi'$ at an element $g \rtimes w_0$ is the same as the character of $\pi$ at the element $gw_0$. In the
basechange character identity, one relates the { character}
of $\pi'$ at an element $g \rtimes w_0$  (called the {\it twisted character} of $\pi'$ at the element $g$)
to the character of a representation of
the subgroup $B_n$ at the element of $B_n$ which is essentially $g\cdot (w_0gw_0) = (gw_0)^2$. Thus, in terms of $\pi$, we will be relating
the character of $\pi$ at $gw_0$ to the character of a representation of $B_n$, call it $\lambda$,
at the element $(gw_0)^2$ which we do not know if it belongs to $B_n$, and even if it does,
it may not define a unique conjugacy class in $H$. From the point of view of $\Sym_{2n}$, the element
$x=gw_0$ is of course any element, and then the question is if there is a good supply of elements
$x \in \Sym_{2n}$ such that $x^2 \in B_n$, and then to find a {\it suitable} conjugacy class in $B_n$ containing
$x^2$. The element $x=1$, and such `singular' elements must be avoided if we are to have any hope of character identity of the representation $\pi$ of $\Sym_{2n}$
at $x$, with the representation $\lambda$ of $B_n$ at the element $\Nm (x)$.

   Since the square
   of a $2d$ cycle is a product of two disjoint cycles of length $d$, it is clear by our earlier description of conjugacy classes in $B_n$ that if  $x\in \Sym_{2n}$ is without fixed points for which all cycles are of even length, then $x^2$ arises from a conjugacy class in $B_n$, although not a unique
   conjugacy class in $B_n$. However, there is
   a unique conjugacy class in $B_n$ consisting only of positive cycles
 such that the intersection
of    the conjugacy class of $x^2$ in $\Sym_{2n}$ with $B_n$ is that conjugacy class in $B_n$,
defining $\Nm(x)$ (on the set of elements $x \in \Sym_{2n}$ which are product of disjoint even cycles
without fixed points).

   \end{remark}

 \section {Basechange  of representations}
 It is well-known that the irreducible representations of $\Sym_{2n}$ are in natural
 bijective correspondence
 with partitions of $2n$. We denote this correspondence by $\p \rightarrow \pi(\p).$ Similarly, by Clifford theory,
 irreducible representations of  $B_n = (\Z/2)^n \rtimes \Sym_n$ are parametrized by an ordered
 pair of partitions  $\{\p_0, \p_1\}$  with  $|\p_0| + |\p_1|=n$.
 Denote the corresponding irreducible  representation of $B_n$ by  $\pi(\p_0,\p_1 ).$

 Before we describe the basechange map from irreducible representations of $B_{n}$ to  irreducible representations of $\Sym_{2n}$, we recall the notion of 2-core partitions, and 2-quotient of a partition.

 A partition is called a 2-core partition if none of the hook lengths in its Young diagram
 is a multiple of 2. (Same definition replacing 2 by any prime $p$ defines a $p$-core partition.)
 It is easy to see that a 2-core partition exists for a number $n$ if and only if the number $n$ is a triangular number:
 $$n = \frac{d(d+1)}{2},$$
 and in this case there is a unique 2-core partition which is the stair-case partition
 $$\{d,d-1,d-2,\cdots, 2,1\}.$$

 Every partition $\p$  has a 2-core partition, call it   $c_2(\p)$
 associated to it, and a pair of partitions   $(\p_0,  \p_1)$
 called the 2-quotient of   $\p$
 such that
 $$ |\p| = |c_2(\p)| +2(|\p_0|  + | \p_1|).$$

 We briefly recall the definitions of 2-core and 2-quotient partitions of any partition,
 more generally with 2 replaced by any prime $p$, associated to a partition. The notion of a $p$-core and $p$-quotient of a partition is due to Littlewood, cf. \cite{Li2}, which arose there in his study of modular representations
 of the symmetric group.

 We will consider a partition interchangeably with the associated Young diagram which has the obvious notion of its rim. For a square in the Young diagram with hook length $p$, $p$ any prime, one looks at its `arm' and `leg', and removes all the squares on the rim lying in-between those squares on the 
 rim where the arm and the leg intersect the rim, and including these  squares  on the 
 rim where the arm and leg intersect the rim. Iterating this process till there is no square
 in the Young diagram of hook length $p$, we come to a Young diagram called the $p$-core of the partition, or of its Young diagram; $p$-core of a partition is a $p$-core partition. Since in each step, one is removing $p$ squares from the Young diagram, the size of the $p$-core is congruent to the size of the original Young diagram modulo $p$.
 
 Here is how one gets the 2-core of a Young diagram. Begin with the bottom most row of the diagram.
 If it has even length, remove it; if it has odd length, remove all boxes but one.
  If there are two consecutive rows (anywhere) which are both of even size or both of odd size, remove these two. At the end, you
 clearly get a Young diagram in which consecutive rows alternate being of
 even and odd size, with the minimal    one of odd size. The 2-core of the partition is
 $(d,d-1,\cdots, 1)$ where $d$ is the number of rows left at the end.

 Now we define the $p$-quotient of a partition   $\p$ which is an ordered collection of
 $p$ partitions   $(\p_0,  \p_1, \cdots \p_{p-1})$.

 Define $\beta$-numbers associated to a partition $\underline{\mu} := \{\mu_1 \geq \mu_2 \geq \cdots \geq \mu_r\}$ (we allow some of the $\mu_i$ to be zero) to be the collection of (now strictly decreasing) numbers $$\{\mu_1+(r-1), \mu_2+(r-2), \cdots, \mu_r \}.$$
 What needs to be kept in mind is that there is an equivalence relation on the set of partitions
 defined by declaring  $\underline{\mu} = \mu_1 \geq \mu_2 \geq \cdots \geq \mu_r$,  and
 $\underline{\nu} = \nu_1 \geq \nu_2 \geq \cdots \geq \nu_s$ to be in the same equivalence class
 if and only if they are the same after adding a few 0's at the tails. This equivalence gives rise to a similar 
 equivalence on associated
 $\beta$-numbers (but which looks quite different); but the point is that from a sequence of $\beta$ numbers (any strictly decreasing sequence of non-negative numbers) one can obtain the
 corresponding  partition un-ambiguously. 

 Coming back to the partition $\p   = p_1 \geq p_2 \geq \cdots \geq p_r$, we
 now assume that $r$ is divisible by $p$ (adding zeroes at the end if
 necessary). For the associated $\beta$-numbers,  $\beta(\p)= \{p_1+(r-1), p_2+(r-2), \cdots, p_r\}$,
 and for each integer $i$, $0 \leq i \leq p-1$, let  $\beta^i(\p)$ be those
 numbers in  $\beta(\p)= \{p_1+(r-1), p_2+(r-2), \cdots, p_r \}$ which are congruent
 to $i$ mod $p$. Define a new sequence of $\beta$-numbers by substracting $i$ from each of the numbers in $\beta^i(\p)$, and then dividing by $p$. These $\beta$ numbers, call them
 $\beta^i(\p)$, are associated to a partition  $\p_i$,
defining  $p$-quotient of    $\p$, an ordered collection of
 $p$ partitions   $(\p_0,  \p_1, \cdots \p_{p-1})$.

It can be seen that to any partition $\p$, its $p$-core $c_p(\p)$,
    and  $p$-quotient $(\p_0,  \p_1, \cdots \p_{p-1})$, determine
    the partition $\p$ uniquely, and conversely, any data of a $p$-core and a $p$-quotient
    is associated to a  partition $\p$. In particular, for $p=2$, and $n$ any integer,
    the set of bi-partitions    $(\p_0,  \p_1)$ of $n$ is in bijective
    correspondence with partitions of $2n$ with empty 2-core, and 2-quotient,  $(\p_0,  \p_1)$.    We will construct the 2-quotient of a 
partition    in some detail now to bring out a few points needed for our work.

    Let $\p = \{p_1 \geq p_2 \geq \cdots \geq p_m \}$ be a partition of $|\p|$ with associated $\beta$-partition:
    $$\beta(\p) := \{p_1+(m-1) >  p_2+(m-2) > \cdots > p_m\}.$$
    Consider the even and odd parts of this $\beta$-partition as:
    \begin{eqnarray*}
      \beta^0(\p):= & & p_{i_1}+(m-i_1)>  p_{i_2}+(m-i_2) > \cdots >  p_{i_k}+(m-i_k) \\
      \beta^1(\p):= & & p_{j_1}+(m-j_1)>  p_{j_2}+(m-j_2) > \cdots >  p_{j_\ell}+(m-j_\ell).
      \end{eqnarray*}

        Divide the numbers appearing in $\beta^0(\p)$ and $\beta^1(\p)-1$ by 2:
        \begin{eqnarray*}
      \frac{\beta^0(\p)}{2}: & & \frac{p_{i_1}+(m-i_1)}{2}>  \frac{p_{i_2}+(m-i_2)}{2} > \cdots >  \frac{p_{i_k}+(m-i_k)}{2} \\
      \frac{\beta^1(\p)-1}{2}: & & \frac{p_{j_1}+(m-j_1-1)}{2}>  \frac{p_{j_2}+(m-j_2-1)}{2} > \cdots >  \frac{p_{j_\ell}+(m-j_\ell-1)}{2}.
      \end{eqnarray*}

        These sequence of strictly decreasing numbers are $\beta$-numbers for the partitions:
        \begin{eqnarray*}
      \p_0: & & \frac{p_{i_1}+(m-i_1)}{2} -(k-1) \geq  \frac{p_{i_2}+(m-i_2)}{2} -(k-2) \geq  \cdots \geq   \frac{p_{i_k}+(m-i_k)}{2} \\
      \p_1: & & \frac{p_{j_1}+(m-j_1-1)}{2} -(\ell -1)>    \cdots >
      \frac{p_{j_\ell}+(m-j_\ell-1)}{2}.
      \end{eqnarray*}

        It follows that (using $m= k+\ell$, and the fact that the set of integers $m-i_\alpha$ and $m-j_\beta$ are exactly a permutation of
        the set of integers $0,1,2,\cdots ,m-1$),
         \begin{eqnarray*}
           |\p| -2(|\p_0| + |\p_1|) & =  & 2(k-1 + k-2 + \cdots + 1 + 0) + 2(\ell -1 + \ell -2 + \cdots +1 +0) \\
           & & -[(m-i_1) + (m-i_2) + \cdots + (m-i_k)] \\ & & - [(m-j_1-1) + (m-j_2-1) + \cdots + (m-j_\ell -1)] \\
           &=& k(k-1) + \ell(\ell-1) - \frac{m(m-1)}{2} + \ell \\
           & = & \frac{(k-\ell)(k-\ell -1)}{2}.
                 \end{eqnarray*}
                   As a consequence, we derive that
\begin{enumerate}
\item  $|\p|  = 2(|\p_0| + |\p_1|)$
  if and only if $k = \ell$, or $k=\ell +1$. 
  Equivalently, the partition $\p$ has empty 2-core
  (thus necessarily with $|\p|$ even)
  if and only if $k = \ell$, or $k=\ell +1$;
  i.e., in the $\beta$  numbers associated with $\p$, either half of them are even and half of them are odd, or the
  even ones are one more than the odd ones.
\item  $|\p|  = 1+ 2(|\p_0| + |\p_1|)$ if and only if $k +1 = \ell$, or $k=\ell +2$.
    Equivalently, the partition $\p$ has 2-core consisting of $1$ (thus necessarily with $|\p|$ odd)
  if and only if  in the $\beta$  numbers associated with $\p$, odd numbers are  one more in cardinality than even numbers, or the even ones are two more than the odd ones.
\end{enumerate}

We summarize the above in the following proposition.

\begin{prop}
\label{prop:2-core}
A partition $\p$ has empty 2-core
  (thus necessarily with $|\p|$ even)
  if and only if $k = \ell$, or $k=\ell +1$,
  i.e., in the $\beta$  numbers associated with $\p$, either half of them are even and half of them are odd, or the
  even ones are one more than the odd ones.

  A partition $\p$ has 2-core consisting of $1$ (thus necessarily with $|\p|$ odd) if and only if $k +1 = \ell$, or
  $k=\ell +2$, i.e.,   in the $\beta$  numbers associated with $\p$, the odd numbers are  one more in cardinality
  than the even numbers, or the even ones are two more than the odd ones.

  In particular, if $|\p|$ is even, and has even number of parts, then $\p$ has empty core if and only if $k=\ell$,
  whereas if $|\p|$ is odd, and has odd number of  parts, then $\p$ has 2-core 1 if and only if $k+1=\ell$. (In our later analysis, we will always assume adding zeros at the end if necessary
  that if $|\p|$ is even,
  it has even number of parts whereas if $|\p|$ is odd, it has odd number of parts.)
  \end{prop}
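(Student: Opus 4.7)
The proposition is essentially a reading-off of the identity
\[
|\p| - 2(|\p_0| + |\p_1|) = \frac{(k-\ell)(k-\ell-1)}{2}
\]
that was just established by the direct $\beta$-number computation. So my plan is to view the proposition as the bookkeeping consequence of that identity, together with two facts about $2$-cores.

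First I would record that by the construction of the $2$-quotient via $\beta$-numbers, removing a rim $2$-hook from $\p$ decreases $|\p|$ by $2$ and leaves the $2$-quotient $(\p_0,\p_1)$ unchanged (it corresponds to moving a $\beta$-number down by $2$ inside its residue class), while iterating until no rim $2$-hook remains yields the $2$-core $c_2(\p)$. Consequently $|c_2(\p)| = |\p| - 2(|\p_0| + |\p_1|)$, so the identity above identifies $|c_2(\p)|$ with $\binom{k-\ell}{2}$ (interpreted as $(k-\ell)(k-\ell-1)/2$, allowed to take the value $0$ or $1$ depending on the sign of $k-\ell$).

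Next I would observe that the only $2$-core partitions are the staircases $(d,d-1,\dots,1)$ of size $d(d+1)/2$; in particular the unique $2$-core of size $1$ is the partition $\{1\}$, and of size $0$ is the empty partition. So the two cases in the proposition reduce to solving
\[
\frac{(k-\ell)(k-\ell-1)}{2} = 0 \quad\text{and}\quad \frac{(k-\ell)(k-\ell-1)}{2} = 1
\]
in integers, which gives $k-\ell \in \{0,1\}$ and $k-\ell \in \{-1,2\}$ respectively. This is exactly the stated equivalence.

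Finally, for the ``in particular'' clause I would use the parity constraint $m = k + \ell$. Since $k+\ell$ and $k-\ell$ have the same parity, if $|\p|$ is even and $m$ is taken even then in the empty-core case the solutions $k-\ell \in \{0,1\}$ force $k-\ell = 0$, i.e.\ $k=\ell$; similarly if $|\p|$ is odd and $m$ is taken odd, the $2$-core-equal-to-$\{1\}$ case selects $k-\ell = -1$, i.e.\ $k+1 = \ell$. I do not foresee a genuine obstacle — the one point that should be stated carefully is the invariance of $(\p_0,\p_1)$ under rim $2$-hook removal, which justifies the equality $|c_2(\p)| = |\p|-2(|\p_0|+|\p_1|)$ on which everything else depends.
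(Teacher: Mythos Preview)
Your proposal is correct and matches the paper's own logic: in the paper the proposition carries no separate proof but is explicitly presented as a summary of the preceding $\beta$-number computation, which derives $|\p| - 2(|\p_0|+|\p_1|) = (k-\ell)(k-\ell-1)/2$ and then reads off the two cases exactly as you do. The only minor difference is that the paper has already recorded $|\p| = |c_2(\p)| + 2(|\p_0|+|\p_1|)$ as a stated fact earlier in the section, so your justification via invariance of the $2$-quotient under rim $2$-hook removal is supplying a reason for something the paper simply asserts; your parity argument for the ``in particular'' clause is likewise a clean way to make explicit what the paper leaves implicit.
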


 \begin{definition} (Basechange) Define the basechange of an
   irreducible  representation   $\pi(\p_0,\p_1 )$ of $B_n$ to be
   the irreducible representation of $\Sym_{2n}$ corresponding to the (unique) partition
   $\p$ such that the 2-core of   $\p$ is empty, and the
   2-quotient of $\p$ is $(\p_0,\p_1)$, in particular, we have
   $ |\p| = 2n= 2(|\p_0|  + | \p_1|).$
Further, define  the basechange of $\pi(\p_0,\p_1 )$ to $\Sym_{2n+1}$ to be
   the irreducible representation of $\Sym_{2n+1}$ corresponding to the (unique) partition
   $\p$ such that the 2-core of   $\p$ is 1, and the
   2-quotient of $\p$ is $(\p_0,\p_1)$, in particular, we have
   $ |\p|= 2n+1 = 1+ 2(|\p_0|  + | \p_1|).$
 \end{definition}

 \section{Example of correspondence between representations of $\Sym_{2n}$ and $\Sym_{2n+1}$}
 In the previous section, we have defined a basechange map from irreducible representations of $B_n$ to irreducible representations
 of $\Sym_{2n}$ as well as $\Sym_{2n+1}$, which induces a bijection between certain irreducible representations  of $\Sym_{2n}$ with certain irreducible representations of $\Sym_{2n+1}$, more precisely there is a bijection between
 irreducible representations  of $\Sym_{2n}$ which correspond to partitions which have empty 2-core with those irreducible representations of $\Sym_{2n+1}$  which correspond to partitions which have 2-core $(1)$, the bijection is defined by demanding the partitions of $2n$ and $2n+1$ have the same 2-quotient.

In this section, as an example, we tabulate the bijection of partitions of 8 with
empty 2-core to partitions of 9 with core (1), inducing a map on representations of the corresponding symmetric groups. We have illustrated this to point out that what is an obvious map from the set of partitions of $2n$ to set of partitions of $2n+1$ from the
point of view of 2-core and 2-quotient, has no obvious formulation from the point of view of the partitions directly. The table below
also gives the character values $\Theta(w_0)$ and  $\Theta'(w_0)$ for the respective representations of $\Sym_8$ and $\Sym_9$, once
again, the character values $\Theta(w_0)$ and  $\Theta'(w_0)$ which are nonzero are the same up to a sign, but the sign is
not obvious by looking at the table, but for which we did give a recipe earlier.

 \begin{align*}
[ 1^8 ] & \stackrel{ \Theta(w_0)=  \Theta'(w_0)= 1}{\xrightarrow  {\hspace*{35mm}}  }  [ 1^9 ]\\
[ 2,1^6 ] & \stackrel{\Theta(w_0) = -\Theta'(w_0)=-1}{  \xrightarrow  {\hspace*{35mm}} }  [ 3, 2, 1^4 ]\\
[ 2^2, 1^4 ] &\stackrel{\Theta(w_0)= - \Theta'(w_0)= 4}{ \xrightarrow  {\hspace*{35mm}} }  [ 3, 1^6 ]\\
[ 2^3, 1^2 ] & \stackrel{\Theta(w_0)=-\Theta'(w_0)= -4}{\xrightarrow  {\hspace*{35mm}}  } [ 3, 2^3 ]\\
[ 2^4 ] & \stackrel{\Theta(w_0)= -\Theta'(w_0)=6}{\xrightarrow  {\hspace*{35mm}} }  [ 3, 2^2, 1^2 ]\\
[ 3, 1^5 ] &\stackrel{\Theta(w_0)=-\Theta'(w_0)= -3}{ \xrightarrow  {\hspace*{35mm}} }  [ 2^2, 1^5 ]\\
[ 3, 2^2, 1 ] &\stackrel{\Theta(w_0) =-\Theta'(w_0)= -2}{ \xrightarrow  {\hspace*{35mm}} }  [ 2^4, 1 ]\\
[ 3^2, 1^2 ] &\stackrel{\Theta(w_0)=\Theta'(w_0)=8}{ \xrightarrow  {\hspace*{35mm}} }  [ 3^2, 1^3 ]\\
[ 3^2, 2 ] &\stackrel{\Theta(w_0)= -\Theta'(w_0)=-6}{ \xrightarrow  {\hspace*{35mm}} }  [ 3^3]\\
[ 4, 1^4 ] & \stackrel{\Theta(w_0)=-\Theta'(w_0)= 3}{\xrightarrow  {\hspace*{35mm}} }  [ 5, 2, 1^2 ]\\
[ 4, 2, 1^2 ] & \stackrel{\Theta(w_0)=-\Theta'(w_0)= -6}{\xrightarrow  {\hspace*{35mm}} }  [ 5, 1^4 ]\\
[ 4, 2^2 ] & \stackrel{\Theta(w_0)=\Theta'(w_0)= 8}{\xrightarrow  {\hspace*{35mm}} }  [ 5, 2^2]\\
[ 4, 3, 1 ] &\stackrel{\Theta(w_0)=-\Theta'(w_0)=-2}{ \xrightarrow  {\hspace*{35mm}} }  [ 5, 4 ]\\
[ 4^2 ] & \stackrel{\Theta(w_0)=-\Theta'(w_0)=6}{\xrightarrow  {\hspace*{35mm}} }  [ 5, 3, 1 ]\\
[ 5, 1^3 ] &\stackrel{\Theta(w_0)=-\Theta'(w_0)=3}{ \xrightarrow  {\hspace*{35mm}} }  [ 4, 2, 1^3 ]\\
[ 5, 3 ] &\stackrel{\Theta(w_0)=-\Theta'(w_0)=-4}{ \xrightarrow  {\hspace*{35mm}} }  [ 4^2, 1 ]\\
[ 6, 1^2 ] &\stackrel{\Theta(w_0)=-\Theta'(w_0)=-3}{ \xrightarrow  {\hspace*{35mm}} }  [ 7, 2 ]\\
[ 6, 2 ] &\stackrel{\Theta(w_0)=-\Theta'(w_0)=4}{ \xrightarrow  {\hspace*{35mm}} }  [ 7, 1^2 ]\\
[ 7, 1 ] & \stackrel{\Theta(w_0)=-\Theta'(w_0)=-1}{\xrightarrow  {\hspace*{35mm}}  } [ 6, 2, 1 ]\\
[ 8 ] &\stackrel{\Theta(w_0)= \Theta'(w_0)=1}{ \xrightarrow  {\hspace*{35mm}} }  [ 9 ]\\
\end{align*}

 The above table misses out the partitions $[5,2,1]$ and $[3,2,1^3]$ of 8, and partitions $[8,1], [6,3], [6,1^3], [4,3,2],
 [4,3,1^2], [4,2^2,1], [3^2,2,1], [2,1^7], [2^3,1^3], [4,1^5]$ of 9, where $\Theta(w_0) =\Theta'(w_0) = 0$.

 \section{A theorem of Littlewood}
 This section aims to give a proof of a theorem due to Littlewood, see pages 143-146 of \cite{Li1}, on character values
 for symmetric groups at exactly the same set of conjugacy classes that we have considered in this paper: 
 product of disjoint
 even cycles without fixed points --- Littlewood's theorem covers only the case of $\Sym_{2n}$, which we prove also for $\Sym_{2n+1}$.
 Our main theorem proved in the next section is a simple consequence of the theorem of Littlewood
 (suitably extended to $\Sym_{2n+1}$).
 
 We have decided to include a proof of the theorem due to Littlewood since the proof (from 1940!) is hard to follow.
 All the proofs
on character values of representations of the symmetric group eventually boil down to a theorem of Frobenius, which is a consequence
of Schur-Weyl duality, turning theorems about character theory of $\GL_n(\C)$ to character theory for the symmetric group. We begin by
recalling the relationship between the two character theories: it is remarkable that they are the same!

Let $R_d$ be the representation ring of the symmetric group $\Sym_d$, treated here only as an abelian group.
Let $R= \sum_{d=0}^{\infty} R_d$, together with the multiplication  $ R_n \otimes R_m \rightarrow R_{n+m}$ which corresponds to induction
of a representation $(V_1 \boxtimes V_2)$ of $\Sym_n \times \Sym_m$ to $\Sym_{n+m}$. These multiplications turn $R$ into a commutative and associative
graded ring. It can be seen that as graded rings, 
$ R \cong \Z[H_1, \cdots, H_d,\cdots ]$, the polynomial ring in infinitely many variables $H_i, i \geq 1$, where each $H_i$ is given
weight $i$, and corresponds to the trivial representation of $\Sym_i$.

On the other hand, let
$$\Lambda_n= \Z[X_1, X_2, \cdots, X_n]^{\Sym_n} = \bigoplus_{k\geq 0}  \Lambda_n^k,$$
where $\Lambda_n^k$ is the space of symmetric polynomials in $ \Z[X_1, X_2, \cdots, X_n]$ of degree $k$.  

Define, $$\Lambda^k = \varprojlim{\Lambda_n^k} ,$$
where the inverse limit is taken with respect to natural map of polynomial rings
$$ \Z[X_1, X_2, \cdots, X_{n+1} ]^{\Sym_{n+1}} \rightarrow \Z[X_1, X_2, \cdots, X_n ]^{\Sym_n}$$
in which $X_{n+1}$ is sent to the zero element.

Finally, define the graded ring
$$\Lambda = \bigoplus_{k \geq 0} \Lambda^k.$$
The ring $\Lambda$, often by abuse of language (in which we too will indulge in) is called the ring of symmetric polynomials in infinitely many variables, comes equipped with surjective homomorphisms to $\Lambda_n$ for all $n \geq 0$, which is in fact an isomorphism
restricted to $\Lambda^k$ onto $\Lambda^k_n$ for $k \leq n$.

As typical elements of the ring $\Lambda$, also of paramount importance, note the following symmetric functions of degree $m$ in infinitely many variables $X_1,X_2,X_3,\cdots$:

\begin{enumerate}
\item $p_m=X_1^m+X_2^m+\cdots$ (an infinite sum),
\item $e_m = \sum {X_{i_1} X_{i_2}\cdots X_{i_m}},$ where the sum is over all indices $i_1<i_2 <\cdots< i_m$,
  \item $h_m=  \sum {X_{i_1} X_{i_2}\cdots X_{i_m}},$ where the sum is over all indices $i_1\leq i_2 \leq \cdots \leq  i_m$.
    \end{enumerate}

Later, for any partition $\underline{\lambda} = \{\lambda_1 \geq \lambda_2 \geq \cdots \geq \lambda_m \}$ of $k$, we will have occasion
to use the following symmetric functions (in infinite number of variables) of degree $k$:

\begin{enumerate}
\item $p_{\underline{\lambda}} =p_{\lambda_1} \cdot p_{\lambda_2} \cdots p_{\lambda_m}$, 
\item $e_{\underline{\lambda}} =e_{\lambda_1} \cdot e_{\lambda_2} \cdots e_{\lambda_m}$,
  \item $h_{\underline{\lambda}} =h_{\lambda_1} \cdot h_{\lambda_2} \cdots h_{\lambda_m}$.
      \end{enumerate}

As $\underline{\lambda}$ varies over all partitions of $k$, $e_{\underline{\lambda}}$ form a basis of the space of symmetric polynomials (in infinitely many variables) of degree $ k$; similarly,
$h_{\underline{\lambda}}$ form a basis of the space of symmetric polynomials of degree $ k$; whereas  $p_{\underline{\lambda}}$ forms a basis after $\Z$ is replaced by $\Q$ as the coefficient ring.

Observe that the graded rings  $R= \sum_{d=0}^{\infty} R_d$ and $\Lambda= \sum _{k=0}^{\infty} \Lambda^k$ are
isomorphic under the map which sends $H_m$ to $h_m$, call this map $\Psi$.

Note that a polynomial representation of $\GL_m(\C)$ has as its character at the diagonal element $(X_1, X_2, \cdots, X_m)$ in  $\GL_m(\C)$,
a symmetric polynomial in $\Z[X_1, \cdots,  X_m]^{\Sym_m}$.
A nice fact about irreducible polynomial representations  of $\GL_m(\C)$, say with highest weight
$\lambda_1 \geq \lambda_2 \geq \cdots \geq \lambda_m$ is that it makes sense to speak of ``corresponding irreducible representations'' 
of $\GL_d(\C)$ for {\it all} $d \geq m$  by extending the highest weight
$\lambda_1 \geq \lambda_2 \geq \cdots \geq \lambda_m$
by adding a few zeros after $\lambda_m$. 
The characters of these 
representations of $\GL_d(\C)$ for $d \geq m$, which are symmetric polynomials in $ \Z[X_1, X_2, \cdots X_{d} ]^{\Sym_{d}}$,  
correspond to each other under the maps:
$ \Z[X_1, X_2, \cdots X_{d} ]^{\Sym_{d}} \rightarrow \Z[X_1, X_2, \cdots X_{d'} ]^{\Sym_{d'}}$ for $d\geq d' \geq m$. Therefore, a polynomial
representation of $\GL_m(\C)$ has as its character an element which can be considered to belong to
$\Lambda= \sum _{k=0}^{\infty} \Lambda^k $ (and not only in $\Z[X_1, \cdots,  X_m]^{\Sym_m}$).
For example, the character of the standard
$m$-dimensional representation  of $\GL_m(\C)$ is the infinite sum $X_1+X_2+ \cdots$.

So far, nothing non-obvious has been said. Now, here is a non-obvious fact, a form of the Schur-Weyl duality, that if
$\underline{\lambda} = \{\lambda_1 \geq \lambda_2 \geq \cdots \geq \lambda_m \}$ is a
partition of $n$, defining an irreducible representation of $\Sym_n$, say $\pi_{\underline{\lambda}}$, and defining at the same time an irreducible
representation of $\GL_d(\C)$ with highest weight $\{\lambda_1 \geq \lambda_2 \geq \cdots \geq \lambda_m \geq 0 \geq \cdots \geq 0\}$
for all $d \geq m$, with its character, the Schur function
${\mathcal S}_{\underline{\lambda}}$,
then $$\Psi(\pi_{\underline{\lambda}}) = {\mathcal S}_{\underline{\lambda}}.$$
(It is helpful to recall that under the Schur-Weyl duality, the trivial representation of ${\mathcal S}_m$ goes to the irreducible representation
${\rm Sym}^m(\C^d)$ of $\GL_d(\C)$ corresponding to the partition $\{1,1,\cdots, 1\}$ of $m$.)

A form of the Frobenius theorem on characters of the symmetric group 
will play an
important role in this work, so we recall this as well as supply a proof in the next Proposition. (See for example
\cite{F-H}, Exercise 4.52(e), for the statement of this result.)

\begin{prop} \label{Frob-ch}
For $\underline{\lambda} = \{\lambda_1 \geq \lambda_2 \geq \cdots \geq \lambda_m \}$,
a partition of $n$, defining an irreducible representation $\pi_{\underline{\lambda}}$ of $\, \Sym_n$,  
and defining at the same time an irreducible
representation of $\GL_d(\C)$ with highest weight $\{\lambda_1 \geq \lambda_2 \geq \cdots \geq \lambda_m \geq 0 \geq \cdots \geq 0\}$
for all $d \geq m$, with its character, the Schur function
${\mathcal S}_{\underline{\lambda}}$, we have the following identity:
\[ \tag{1} {\mathcal S}_{\underline{\lambda}} = \sum _{\rho} \frac{\Theta_{\underline{\lambda}} (c_\rho)}{|Z(c_\rho)|} p_\rho ,\]
where $\rho = (\rho_1,\rho_2, \cdots )$ is a partition of $n$, defining a conjugacy class $c_\rho$ in $\Sym_n$, whose centralizer
in $\Sym_n$ is $Z(c_\rho)$ of order  $|Z(c_\rho)|$; the symmetric function $p_\rho $ is the product of symmetric polynomials
$p_{\rho_i}= X_1^{\rho_i}+X_2^{\rho_i} + X_3^{\rho_i}+ \cdots$.
\end{prop}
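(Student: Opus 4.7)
The plan is to deduce formula (1) from Schur--Weyl duality together with orthogonality of characters of $\Sym_n$. Concretely, I would exploit the $(\GL_d(\C),\Sym_n)$-bimodule structure on $(\C^d)^{\otimes n}$ and compute the trace of a typical element in two ways.

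First, fix $d \geq m$ and let $g \in \GL_d(\C)$ be diagonal with eigenvalues $X_1,\ldots,X_d$. Let $\sigma \in \Sym_n$ have cycle type $\rho=(\rho_1,\rho_2,\ldots)$. I would compute $\operatorname{tr}\bigl(g^{\otimes n} \circ \sigma\bigr)$ on $(\C^d)^{\otimes n}$ directly: writing the matrix in the standard basis $e_{i_1}\otimes\cdots\otimes e_{i_n}$, only those basis vectors whose indices are constant on each cycle of $\sigma$ contribute a diagonal entry, and the contribution of a cycle of length $\rho_i$ is $\operatorname{tr}(g^{\rho_i}) = p_{\rho_i}(X_1,\ldots,X_d)$. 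The cycles being independent, the total trace equals
\[
p_\rho(X_1,\ldots,X_d) \;=\; \prod_i p_{\rho_i}(X_1,\ldots,X_d).
\]

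Second, by Schur--Weyl duality,
\[
(\C^d)^{\otimes n} \;\cong\; \bigoplus_{\underline{\lambda}\vdash n} V^d_{\underline{\lambda}} \otimes \pi_{\underline{\lambda}}
\]
as a $\GL_d(\C)\times \Sym_n$-module, where $V^d_{\underline{\lambda}}$ is the irreducible polynomial representation of $\GL_d(\C)$ of highest weight $\underline{\lambda}$ (understood to be zero if $\underline{\lambda}$ has more than $d$ parts). Taking the trace of $g \otimes \sigma$ on this decomposition and using that the character of $V^d_{\underline{\lambda}}$ at $g$ is $\mathcal{S}_{\underline{\lambda}}(X_1,\ldots,X_d)$, we obtain
\[
p_\rho(X_1,\ldots,X_d) \;=\; \sum_{\underline{\lambda}\vdash n} \Theta_{\underline{\lambda}}(c_\rho)\, \mathcal{S}_{\underline{\lambda}}(X_1,\ldots,X_d).
\]
Since this identity of symmetric polynomials holds for all $d\geq m$, it holds in the inverse limit $\Lambda$, giving $p_\rho = \sum_{\underline{\lambda}} \Theta_{\underline{\lambda}}(c_\rho)\, \mathcal{S}_{\underline{\lambda}}$.

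Finally, to recover (1) I would invert this relation using the orthogonality of irreducible characters of $\Sym_n$: multiplying by $\Theta_{\underline{\mu}}(c_\rho)/|Z(c_\rho)|$ and summing over $\rho$,
\[
\sum_\rho \frac{\Theta_{\underline{\mu}}(c_\rho)}{|Z(c_\rho)|}\, p_\rho
\;=\; \sum_{\underline{\lambda}\vdash n} \mathcal{S}_{\underline{\lambda}} \sum_\rho \frac{\Theta_{\underline{\lambda}}(c_\rho)\,\Theta_{\underline{\mu}}(c_\rho)}{|Z(c_\rho)|}
\;=\; \mathcal{S}_{\underline{\mu}},
\]
which is formula (1). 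The main substantive step is the cycle-by-cycle trace computation that identifies $\operatorname{tr}(g^{\otimes n}\sigma)$ with $p_\rho$; everything else is routine once one accepts Schur--Weyl duality and the orthogonality relations for $\Sym_n$, both of which the paper has already invoked.
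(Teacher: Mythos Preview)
Your argument is correct and follows the standard textbook route to Frobenius' formula: compute the trace of $g^{\otimes n}\sigma$ on $(\C^d)^{\otimes n}$ cycle by cycle to obtain $p_\rho$, decompose via Schur--Weyl, and invert with character orthogonality. The paper, however, takes a genuinely different path. It reformulates the statement as $\Psi(\pi) = \sum_\rho \frac{\Theta_\pi(c_\rho)}{|Z(c_\rho)|}\, p_\rho$ for an arbitrary representation $\pi$ of $\Sym_n$, and proves this by exploiting the graded ring structure of $R=\bigoplus_d R_d$: first it verifies the identity for the trivial representation $H_n$ via the generating-function expansion of $\prod_i (1-tX_i)^{-1}$, and then it shows the identity is multiplicative under $(\pi_1,\pi_2)\mapsto \mathrm{Ind}_{\Sym_m\times\Sym_n}^{\Sym_{m+n}}(\pi_1\boxtimes\pi_2)$ using the standard formula for characters of induced representations. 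Since the $H_n$ generate $R$, this suffices; Schur--Weyl enters only at the end to identify $\Psi(\pi_{\underline{\lambda}})$ with $\mathcal{S}_{\underline{\lambda}}$. Your approach is more direct and bypasses the generating-function step, at the price of invoking orthogonality as an extra ingredient; the paper's approach, on the other hand, delivers the identity uniformly for all virtual representations, which it later applies without further comment to the reducible representation $\pi(\p_0)\times\pi(\p_1)$ in the proof of Theorem~\ref{little}.
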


\begin{proof}
  We will prove a generalised version of the identity being proved:
  \[ \tag{1} {\mathcal S}_{\underline{\lambda}} = \sum _{\rho} \frac{\Theta_{\underline{\lambda}} (c_\rho)}{|Z(c_\rho)|} p_\rho ,\]
  to the following identity among symmetric functions:
\[ \tag{2} {\Psi(\pi)} = \sum _{\rho} \frac{\Theta_{\pi} (c_\rho)}{|Z(c_\rho)|} p_\rho ,\]
where $\pi$ is any representation of a symmetric group $\Sym_n$, and $\Psi(\pi)$ is the associated symmetric function of degree $n$ in infinitely many
  variables arising through the isomorphism $\Psi: R \rightarrow \Lambda$. 
  (The symmetric function $\Psi(\pi)$ can of course be identified to the character of a representation of  $\GL_m(\C)$.)
    Once (2) is proved, appealing to the Schur-Weyl duality according to which the isomorphism of graded rings $\Psi: R \rightarrow \Lambda$
  which is defined by sending the trivial representation $H_n$ of $\Sym_n$ to the symmetric polynomial $h_n$ (in infinitely many variables)
  takes the representation $\pi_{\underline{\lambda}}$ to the Schur function  ${\mathcal S}_{\underline{\lambda}}$ proves (1) and hence the Proposition.

  Since the identity (2) is linear in the representation $\pi$, it suffices to check it on a set of linear generators (over $\Z$, although it suffices
  to do it for generators over $\Q$ too) of the ring $R$. Our proof of identity (2) will therefore be accomplished in two steps.

  \begin{enumerate}
  \item Prove that the identity (2) holds for  the trivial representation $H_n$ of ${\Sym}_n$.
  \item If the identity (2) holds for a representation $\pi_1$ of $\Sym_m$ and $\pi_2$ of $\Sym_n$, then it also holds good for
    the representation $\pi_1 \times \pi_2$ of $\Sym_{m+n}$ induced from the representation $\pi_1 \otimes \pi_2$ of
    $\Sym_m \times \Sym_n \subset \Sym_{m+n}$.
  \end{enumerate}

  We begin by proving step 1, i.e. that the identity (2) holds for  the trivial representation $H_n$ of ${\Sym}_n$.
  
  For any integer $m$, let $V_m = \C^m$ be the $m$-dimensional vector space over $\C$. By definition, $h_k$ is the character
  of the representation ${\rm Sym}^k(V_m)$ assuming that $m \geq k$; in fact the integer $m$ will play no role as long as it is
  large enough, preferably infinity! Thus,
  \[ \sum_{n=0}^{\infty} t^n h_n = \frac{1}{(1-tX_1) (1-tX_2) \cdots (1-tX_m)}.\]

  Taking logarithm  on the two sides,
  \begin{eqnarray*}  \log( \sum_{n=0}^{\infty} t^n h_n)  & = & - \sum_{i=1}^{m} \log (1-tX_i), \\
                                                     & = & t(\sum X_i) + \frac{t^2}{2}( \sum X_i^2) +  \frac{t^3}{3}( \sum X_i^3) + \cdots
    \end{eqnarray*}
  Now taking exponential of the two sides:
  \begin{eqnarray*}
    \sum_{n=0}^{\infty} t^n h_n   & = & \exp^{t(\sum X_i)}  \cdot \exp^ {\frac{t^2}{2}( \sum X_i^2) }  \cdot
    \exp^{\frac{t^3}{3}( \sum X_i^3)}  \cdots \\
    & = & [1 +tp_1 + \frac{(tp_1)^2}{2!} + \cdots]
        [  1 + t^2p_2/2+\frac{(t^2p_2/2)^2}{2!} +\cdots][  1 + t^3p_3/3 +\frac{(t^3p_3/3)^2}{2!}+ \cdots]    \end{eqnarray*}

Therefore,

\begin{eqnarray*}
h_n & = & \sum \frac{p_1^{i_1}}{i_1!}\frac{(p_2/2)^{i_2}}{i_2!}\frac{(p_3/3)^{i_3}}{i_3!} \cdots \\
   & \stackrel{*}{=} & \sum \frac{p_1^{i_1}\cdot  p_2^{i_2} \cdot p_3^{i_3} \cdots } {i_1! 2^{i_2} i_2! 3^{i_3} i_3! \cdots}  \\
\end{eqnarray*}
where the summation is taken over $i_1+2i_2+ \cdots =n$. This proves the identity (2) for the trivial representation $H_n$ of $\Sym_n$
for which $\Theta_\pi(c_\rho)$ is identically 1, and the denominators in the right hand side of the equality $(*)$ are the order
of the centralizer of the conjugacy class $c_\rho \in \Sym_n$.

Next, we prove that if the identity (2) holds for a representation $\pi_1$ of $\Sym_m$ and $\pi_2$ of $\Sym_n$, then it also holds good for
the representation $\pi_1 \times \pi_2$ of $\Sym_{m+n}$
induced from the representation $\pi_1 \otimes \pi_2$ of $\Sym_m \times \Sym_n$.

For this, we slightly rewrite the identity (2) as:     
\[ \tag{3} {\Psi(\pi)} = \frac{1}{n!} \sum _{x \in \Sym_n} {\Theta_{\pi} (x)} p_x ,\]
where $\pi$ is a representation of the symmetric group $\Sym_n$ with $\Theta_{\pi}(x)$ its character at an element $x \in \Sym_n$, and $p_x$
is what was earlier denoted as $p_{\rho(x)}$ where $\rho(x)$ denotes the partition of $n$ associated to $x$.

Because $$\Psi(\pi_1 \times \pi_2) = \Psi(\pi_1) \cdot \Psi(\pi_2),$$
assuming that (3) holds for $\pi_1$ and $\pi_2$, to prove that it also holds for $\pi_1 \times \pi_2$,
we are reduced to proving:

\[\frac{1}{(n+m)!} \sum_{g \in \Sym_{m+n}} \Theta_{\pi_1 \times \pi_2}(g)p_g = \frac{1}{m! n!} \sum_{(h_1,h_2) \in \Sym_m \times \Sym_n} \Theta_{\pi_1}(h_1)
\Theta_{ \pi_2}(h_2) p_{h_1} p_{h_2}. \tag{4} \]
This will be a simple consequence of the character of the induced
representation 
$\pi_1 \times \pi_2$ of $\Sym_{m+n}$ given by:
$ \pi_1 \times \pi_2 = {\rm Ind}_{ \Sym_m \times \Sym_n}^{\Sym_{m+n}} (\pi_1 \otimes \pi_2)$.

Note the well-known identity regarding the character $f'$ of the induced representation ${\rm Ind}_H^G(U)$
of a representation $U$ of $H$ with character $f$ (a function on $H$, extended to a function
on $G$ by declaring it zero outside $H$) at an element $s \in G$:
$$f'(s) = \frac{1}{|H|} \sum_{t \in G} f(t^{-1}st).$$
It follows that for any conjugacy class function $\lambda(s)$ on $G$,
\[\frac{1}{|G|} \sum_{s \in G} f'(s)\lambda(s)  = \frac{1}{|H|} \sum_{t \in H} f(t) \lambda(t). \tag{5}\]
Now (3) follows from applying (5) to the induced representation
$ \pi_1 \times \pi_2 = {\rm Ind}_{ \Sym_m \times \Sym_n}^{\Sym_{m+n}} (\pi_1 \otimes \pi_2)$, and where $\lambda(s) = p_s$
for $s\in \Sym_{m+n}$ are symmetric functions used before (products of $X_1^{i_1}+X_2^{i_1}+X_3^{i_1} + \cdots$). (We
are thus applying the identity (5) for $\lambda(s)$ not a scalar valued function on $G$, but rather with values in the
ring of symmetric polynomials, which we leave to the reader to think about!)

We have thus completed the proof of the Proposition.\end{proof}

Before proceeding further,  we need to introduce a sign $\epsilon(\p)$ associated to any partition with either empty
2-core or with 2-core 1.

\begin{definition}  \label{shuffle} (Sign of a partition)
  (a)
  For a partition $\p$ of an even number with even number of parts $2m$ (by adding a zero at the end if necessary)
  with $\beta$-numbers
   $\beta(\p) =\{\beta_{2m-1} > \beta_{2m-2} > \cdots > \beta_1 >\beta_0\}$, and
  with   the  even and odd $\beta$-numbers: $\beta^0(\p)$ and  $\beta^1(\p)$ of the same cardinality.
  Let $X_{2m}=\{2m-1, 2m-2, \cdots, 1, 0\}$, and let $i(\p)$ be the bijection from $X_{2m}$ to $\beta(\p)$ sending $i \rightarrow \beta_i$.
Let  $j(\p)$ be  the unique order preserving bijective
  map from the ordered set $\beta^0(\p) \cup \beta^1(\p)$ (in which there is no order relationship between odd and even numbers)
  to the ordered set $X_{2m}$ taking even numbers to even numbers, and odd numbers to odd numbers. 
 The permutation $s(\p)$  on $X_{2m}$ defined as the composition of the maps $X_{2m} \stackrel{i(\p)}{\rightarrow} \beta(\p) \stackrel{j(\p)}{\rightarrow} X_{2m}$ will be called the {\it shuffle permutation} associated to $\p$, and its sign $(-1)^{s(\p)}$
 will be denoted $\epsilon(\p)$.  

 (b) For a partition $\p$ of an odd number with odd number of parts $2m+1$ (by adding a zero at the end if necessary)
  with $\beta$-numbers
   $\beta(\p) =\{\beta_{2m} > \beta_{2m-1} > \cdots > \beta_1 >\beta_0\}$, and
  with   the  even and odd $\beta$-numbers: $\beta^0(\p)$ and  $\beta^1(\p)$ with the number of odd $\beta$-numbers $m+1$,
  let $X_{2m+1}=\{2m+1, 2m, 2m-1, \cdots, 1\}$ (let's be reminded that $X_{2m+1}$ is different from $X_{2m}$, and in particular,
  it has more odd numbers than even numbers).
  Let $i(\p)$ be the bijection from $X_{2m+1}$ to $\beta(\p)$ sending $i \rightarrow \beta_{i-1}$.
Let  $j(\p)$ be  the unique order preserving bijective
  map from the ordered set $\beta^0(\p) \cup \beta^1(\p)$ (in which there is no order relationship between odd and even numbers)
  to the ordered set $X_{2m+1}$ taking even numbers to even numbers, and odd numbers to odd numbers. 
 The permutation $s(\p)$  on $X_{2m+1}$ defined as the composition of the maps $X_{2m+1} \stackrel{i(\p)}{\rightarrow} \beta(\p) \stackrel{j(\p)}{\rightarrow} X_{2m}$ will be called the {\it shuffle permutation} associated to $\p$. Define,
 $\epsilon(\p) = (-1)^m (-1)^{s(\p)}$.  
\end{definition}

\begin{remark} The sign defined here, see Proposition \ref{sign2} below for a much simpler definition, will eventully be the sign of the associated
  respresentation of the symmetric group $\Sym_{|\p|}$ at the element $w_0 =(12)(34)\cdots (2n-1,2n)$ inside $\Sym_{2n}$ or $\Sym_{2n+1}$
  for $|\p| = 2n,2n+1$.
\end{remark}
The proof of the following proposition will be given by Arvind Ayyer in Appendix~.

\begin{prop} \label{sign2}
For a partition $\p$ of an integer $n$,  let $2k$ or $2k+1$,
respectively, be the number of odd entries in $\p$. Associated to the partition $\p$, we define another sign
$\epsilon'(\p) := (-1)^k$. 
If the partition $\p$ corresponds to an irreducible representation $\pi$ of $\Sym_{|\p|}$, then we define $\epsilon'(\pi) =
\epsilon'(\p)$. The the following holds:

$$\epsilon(\p) =\epsilon'(\p).$$
\end{prop}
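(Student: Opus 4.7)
The plan is to unpack the definition of $\epsilon(\p)$ in terms of the number of \emph{parity mismatches} between the $\beta$-numbers and the target positions in $X_{2m}$ (or $X_{2m+1}$ in the odd case), and then reduce the proposition to a uniform claim about the sign of certain shuffle permutations.

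First, I would reformulate both signs using the substitution $a_j := p_{M-j}$ (where $M$ is the number of parts, even in case~(a), odd in case~(b)), so that $\beta_j = a_j + j$. The parity of $\beta_j$ equals the parity of the target position exactly when $a_j$ has a specific parity (even, in case~(a); odd, in case~(b)). Letting $W$ denote the number of indices where the parities disagree, a direct count gives $W = 2k$ in case~(a) and $W = 2(m-k)$ in case~(b). Thus $(-1)^{W/2}$ equals $(-1)^k = \epsilon'(\p)$ in case~(a), and equals $(-1)^{m-k}$ in case~(b), which combined with the extra factor $(-1)^m$ in the definition of $\epsilon(\p)$ again yields $\epsilon'(\p)$. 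So the entire proposition reduces to the following combinatorial lemma.

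\textbf{Key lemma.} Let $w=(\pi_1,\ldots,\pi_N)\in\{0,1\}^N$ be a $0/1$-sequence whose number of $0$'s and number of $1$'s match those of the target sequence $t=(t_1,\ldots,t_N)$ (namely the parity pattern of $X_{2m}$ or $X_{2m+1}$). Let $W$ be the number of indices where $\pi_i\neq t_i$. Then the sign of the unique order-preserving shuffle that carries $w$ to $t$ is $(-1)^{W/2}$. I would prove this by the following reduction. At $w=t$ the shuffle is the identity, $W=0$, and both quantities equal $+1$. For any other $w$, one can reach $w$ from $t$ by a sequence of adjacent transpositions that interchange an adjacent pair $(\pi_i,\pi_{i+1})$ of distinct parities (same-parity adjacent swaps do nothing). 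Each such elementary move multiplies the shuffle sign by $-1$, and a case check at the two target parities of positions $(i,i+1)$ shows that $W$ changes by exactly $\pm 2$, so $(-1)^{W/2}$ also flips. Hence $\mathrm{sgn}(\mathrm{shuffle})\cdot(-1)^{-W/2}$ is invariant under the transitive action of these moves and equals $+1$ at $t$.

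The main obstacle is the bookkeeping for case~(b): the index set $X_{2m+1}=\{1,2,\ldots,2m+1\}$ is shifted by one relative to the $\beta$-indices, the number of odd and even $\beta$-numbers differ by one, and one must correctly match the $(-1)^m$ normalization with the formula $W=2(m-k)$. Once that parity accounting is done carefully, the lemma applies uniformly and the proposition follows.
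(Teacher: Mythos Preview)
Your proof is correct and takes a genuinely different route from the paper's appendix. The paper computes the shuffle sign by factoring it as a product $\epsilon_1\epsilon_2$ of two explicit ``bring odds to the front'' permutations, obtains the parity of $o_1+\cdots+o_n+n$ via an inversion count, and then runs a separate counting argument with the auxiliary sequence $\delta_i=2n-i$ to show that the set $S=\{i:\lambda_{o_i}\text{ odd}\}$ has exactly $k$ elements (and similarly $k+1$ in the odd case). Your approach instead strips the problem down to a statement about $0/1$-sequences: the sign of the order-preserving shuffle sending $w$ to the alternating target $t$ depends only on the number $W$ of mismatched positions, and equals $(-1)^{W/2}$. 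You prove this by an invariance argument under adjacent swaps of unequal entries, which is clean and treats both parities at once; the case split only enters in the one-line computation of $W$ (equal to $2k$ or $2(m-k)$). What the paper's approach buys is the sharper intermediate statement $|S|=k$ rather than just its parity; what yours buys is a uniform, index-free argument that avoids the $\delta$-bookkeeping entirely. One small point worth making explicit in your write-up: the reason a single adjacent swap of $\pi_i\neq\pi_{i+1}$ multiplies the shuffle sign by $-1$ is that the new shuffle is literally the old one composed with the transposition $(i\;i{+}1)$, since the ranks within each parity class are unchanged except at those two positions; you gesture at this but it is the crux of the lemma.
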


Next, we recall the following theorem from  \cite{Li1}
where it is contained in equations 7.3.1 and 7.3.2 of page 132; it is also contained in \cite{P} as Theorem 2.
Both \cite{Li1} as well as \cite{P} deal with a more general theorem involving $\GL_{mn}(\C)$. This theorem as well as the next theorem
should be considered as the $\GL_n(\C)$ analogues of theorems on character values that we strive to prove here. It is in this theorem that the notion of $p$-core and $p$-quotients make their appearance (stated here only for $p=2$, although stated more generally in \cite{P}).

\begin{thm} \label{P-L}Let $\underline{\lambda} = \{\lambda_1 \geq \lambda_2 \geq \cdots \geq \lambda_{2m} \}$ be the highest weight of an
  irreducible polynomial representation $V_{\underline{\lambda}}$ of $\GL_{2m}(\C)$ with character  ${\mathcal S}_{\underline{\lambda}}$.
  Let $$X=(x_1,x_2,\cdots, x_m, -x_1, -x_2, \cdots, -x_m) =  (\underline{X}, -\underline{X}), $$ be a diagonal matrix in $\GL_{2m}(\C)$
  with $x_i \in \C^\times$ arbitrary. Then if  ${\mathcal S}_{\underline{\lambda}}(X)$ is not identically zero, its 2-core $c_2(\underline{\lambda})$ must be
  empty, in which case, if 
  $\{  \underline{\lambda}^0, \underline{\lambda}^1 \}$ is the 2-quotient of $\underline{\lambda} $, then,
  $${\mathcal S}_{\underline{\lambda}}(X) = \epsilon ( \underline{\lambda}) {\mathcal S}_{\underline{\lambda}^0}(\underline{X}^2) {\mathcal S}_{\underline{\lambda}^1}(\underline{X}^2),$$
  where $   {\mathcal S}_{\underline{\lambda}^0}$ and ${\mathcal S}_{\underline{\lambda}^1}$ are the characters of the corresponding
  highest weight modules of $\GL_m(\C)$, $ \underline{X}$ is the diagonal matrix  $(x_1,x_2,\cdots, x_m)$, and $\underline{X}^2$
  its square.
  \end{thm}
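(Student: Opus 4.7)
The plan is to apply the Weyl (Jacobi) character formula
\[
\mathcal{S}_{\underline{\lambda}}(y_1,\ldots,y_{2m}) = \frac{\det(y_j^{\beta_i})_{1\le i,j\le 2m}}{\det(y_j^{2m-i})_{1\le i,j\le 2m}}, \qquad \beta_i = \lambda_i + 2m - i,
\]
evaluate both determinants at $(y_1,\ldots,y_{2m}) = (x_1,\ldots,x_m,-x_1,\ldots,-x_m)$, and block-diagonalize them by exploiting the parity of the exponents $\beta_i$.

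For the numerator, note that substituting $y_{j+m}=-y_j$ for $j\le m$ makes column $j+m$ equal to column $j$ multiplied by the diagonal factor $(-1)^{\beta_i}$ in row $i$. I would then perform, for each $j=1,\ldots,m$, the invertible paired column operation $(C_j,C_{j+m}) \mapsto ((C_j+C_{j+m})/2,\,(C_j-C_{j+m})/2)$, which scales the overall determinant by $(-1/2)^m$. After this, entry $(i,j)$ equals $x_j^{\beta_i}$ if $\beta_i$ is even and $0$ otherwise, while entry $(i,j+m)$ has the complementary pattern. Sorting the rows to place all even-$\beta$ rows above all odd-$\beta$ rows (each parity class in its original decreasing order) contributes the sign $(-1)^{s(\underline{\lambda})}$ of the shuffle permutation from Definition~\ref{shuffle}, and yields a block-diagonal matrix whose blocks have sizes $k\times m$ and $(2m-k)\times m$, where $k=|\beta^0(\underline{\lambda})|$. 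The determinant therefore vanishes unless $k=m$, which by Proposition~\ref{prop:2-core} is exactly the condition that the $2$-core of $\underline{\lambda}$ is empty. When $k=m$, the even block $(x_j^{\beta^0_i}) = ((x_j^2)^{\beta^0_i/2})$ has exponents equal to the $\beta$-numbers of $\underline{\lambda}^0$, so by the Weyl formula it equals $\mathcal{S}_{\underline{\lambda}^0}(\underline{X}^2)\prod_{i<j}(x_i^2-x_j^2)$; in the odd block, factoring $x_j$ out of column $j$ gives $\prod_j x_j$ times $\mathcal{S}_{\underline{\lambda}^1}(\underline{X}^2)\prod_{i<j}(x_i^2-x_j^2)$.

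Running the identical procedure on the Vandermonde denominator produces the same factor $(-2)^m$ and, since the $\beta$-numbers $\{2m-1,\ldots,0\}$ of the empty partition already have exactly $m$ evens and $m$ odds arranged so that the shuffle permutation is trivial, contributes sign $+1$ and the expected $\prod_j x_j\cdot\prod_{i<j}(x_i^2-x_j^2)^2$. Taking the quotient, the factors $(-2)^m$, $\prod_j x_j$, and the two copies of the Vandermonde in $x_j^2$ all cancel, leaving exactly $\epsilon(\underline{\lambda})\,\mathcal{S}_{\underline{\lambda}^0}(\underline{X}^2)\,\mathcal{S}_{\underline{\lambda}^1}(\underline{X}^2)$.

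The main obstacle I anticipate is sign bookkeeping: one must verify on the nose that the row permutation I use (sort by parity, preserving the decreasing order inside each parity class) has sign equal to $(-1)^{s(\underline{\lambda})}$ as defined via the composition $X_{2m}\xrightarrow{i(\underline{\lambda})}\beta(\underline{\lambda})\xrightarrow{j(\underline{\lambda})}X_{2m}$ in Definition~\ref{shuffle}, and that the baseline contribution from the denominator is indeed $+1$ rather than some other sign. Once this calibration is checked, the argument reduces to the bookkeeping of Vandermonde prefactors described above.
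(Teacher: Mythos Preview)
Your proposal is correct and follows essentially the same route as the paper's proof: evaluate the Weyl numerator and denominator at $(\underline{X},-\underline{X})$, perform paired column operations to split each row according to the parity of its exponent $\beta_i$, observe that the resulting block structure forces $k=m$ (empty $2$-core) for nonvanishing, and then identify the two $m\times m$ blocks with the Weyl numerators for $\underline{\lambda}^0$ and $\underline{\lambda}^1$ in the variables $x_j^2$. The only cosmetic differences are that the paper uses the asymmetric column operations $C_{j+m}\to C_{j+m}+C_j$ followed by $C_j\to C_j-\tfrac12 C_{j+m}$ (determinant-preserving, leaving a factor $2^m$ inside one block) rather than your symmetric version, and that the paper puts the odd-$\beta$ block in the first $m$ columns whereas you put the even one there; this last choice is exactly what makes your anticipated sign calibration nontrivial, and the paper handles it just as you predict, by noting that the row-sort needed for the numerator is $s(\underline{\lambda})s_0$ while that for the denominator is $s_0$, so that the quotient contributes precisely $(-1)^{s(\underline{\lambda})}=\epsilon(\underline{\lambda})$.
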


\begin{proof}
  For the sake of completeness, we give the proof. Write the matrix whose determinant represents the numerator of the
Weyl character formula as (where $\beta_i = \lambda_i+ 2m-i$):

$$
\left ( \begin{array}{ccccccc} 
x_1^{\beta_1} &  x_2^{\beta_1}   & \cdots  & x_m^{\beta_1} & 
(-x_1)^{\beta_1}  &  \cdots &   (-x_m)^{\beta_1}  \\
x_1^{\beta_2} &  x_2^{\beta_2}  &  \cdots & x_m^{\beta_2} & 
(-x_1)^{\beta_2}    & \cdots &   (-x_m)^{\beta_2} \\
\cdot  &  \cdot   &   \cdots   &  \cdot  
&  \cdot  &  \cdots  & \cdot     \\
\cdot  &  \cdot   &   \cdots   &  \cdot  
&  \cdot  &  \cdots & \cdot   \\
x_1^{\beta_{2m}} &  x_2^{\beta_{2m}}   & \cdots &  x_m^{\beta_{2m}} & 
(-x_1)^{\beta_{2m}}  & \cdots &  (-x_m)^{\beta_{2m}} 
\end{array}\right)\cdot$$

\vspace{3mm}

In this $2m\times 2m$-matrix, adding the first $m$ columns of the matrix to the last $m$ columns, we find that all the rows
of the last $m$ columns with $\beta_i$ odd become zero, and those rows with $\beta_i$ even get multiplied by 2. In the new matrix,
substracting the half of last $m$ columns to the first $m$ columns, makes all rows in the first $m$ columns with $\beta_i$ even to be zero. Let $d$ be the number of $\beta_i$ which are odd, and therefore $2m-d$ is the number of $\beta_i$ which are even.
Thus we get a matrix in which in the first $m$ columns,   there are exactly $d$ nonzero rows, and in the last $m$ columns, there
are exactly $2m-d$ complementary rows which are nonzero.

The determinant of such a matrix is nonzero only for $d=2m-d$, i.e., $d=m$, and in which case, by permuting rows, we come to a
block diagonal matrix which looks like (where $\gamma_k, \delta_\ell$ are the odd and even numbers among $\beta_i$ written in decreasing order):

$$
\left ( \begin{array}{cllllllc} 
x_1^{\gamma_1} &  x_2^{\gamma_1}   & \cdots  & x_m^{\gamma_1} & 0 & 0 &  \cdots &   0 \\
x_1^{\gamma_2} &  x_2^{\gamma_2}  &  \cdots & x_m^{\gamma_2} & 0  & 0  & \cdots &   0 \\
\cdot  &  \cdot   &   \cdots   &  \cdot  & \cdot  & \cdot  &  \cdots & \cdot   \\
x_1^{\gamma_{m}} &  x_2^{\gamma_{m}}   & \cdots &  x_m^{\gamma_{m}} & 0& 0   & \cdots & 0 \\
0& 0   & \cdots & 0 & x_1^{\delta_1} &  x_2^{\delta_1}   & \cdots  & x_m^{\delta_1} \\
0& 0   & \cdots & 0 & x_1^{\delta_2} &  x_2^{\delta_2}  &  \cdots & x_m^{\delta_2} \\
\cdot  &  \cdot   &   \cdots   &  \cdot  & \cdot &  \cdot  &  \cdots & \cdot   \\
0& 0   & \cdots & 0 & x_1^{\delta_{m}} &  x_2^{\delta_{m}}   & \cdots &  x_m^{\delta_{m}}
\end{array}\right)\cdot$$

The determinant of this matrix is the product of two Weyl numerators for $\GL_m(\C)$, in which $\delta_i$ being even,
we write $x_j^{\delta_i}$ as $(x_j^2)^{\delta_i/2}$, and $\gamma_i$ being odd, we write  $x_j^{\gamma_i}$ as $(x_j^2)^{(\gamma_i-1)/2} \cdot x_j$.
We have a similar factorization of the Weyl denominator.

Next we observe that the deteriminant of a matrix in which the rows have been shuffled using a permutation $\sigma$ of the rows,
changes by multiplication by the sign $(-1)^\sigma$. This needs to be done both for the numerator which involves the `shuffle permutation' introduced earlier in Definition \ref{shuffle}, and the denominator for which the shuffle permutation is identity matrix; to be more precise, the numerator needs the permutation $s(\p)s_0$, and the denominator needs $s_0$ where $s_0$ is the permutation of
$X_{2m} = \{ 2m-1, 2m-2,\cdots, 1,0\}$  sending odd numbers in $X_{2m}$ consecutively to $\{2m-1, 2m-2,\cdots, m\}$ and even numbers
in $X_{2m}$ consecutively to $\{m-1, \cdots, 1,0\}$.

This completes the proof of the theorem.\end{proof}

We will also need the following variant of the previous theorem which is proved as the previous theorem by manipulations
with the explicit character formula for $\GL_n(\C)$ as quotients of two determinants; we will not give a proof of this theorem.

\begin{thm} \label{P2} Let $\underline{\lambda} = \{\lambda_1 \geq \lambda_2 \geq \cdots \geq \lambda_{2m+1} \}$
  be the highest weight of an
  irreducible polynomial representation $V_{\underline{\lambda}}$ of $\GL_{2m+1}(\C)$ with character  ${\mathcal S}_{\underline{\lambda}}$.
  Let $$X=(x_1,x_2,\cdots, x_m, -x_1, -x_2, \cdots, -x_m,x) =  (\underline{X}, -\underline{X}, x), $$ be a diagonal matrix in $\GL_{2m+1}(\C)$
  with $x_i, x \in \C^\times$ arbitrary. Then if  ${\mathcal S}_{\underline{\lambda}}(X)$ is not identically zero, then the number of even entries and number of odd entries in the $\beta$-sequence
$\beta(\underline{\lambda}) = \{\beta_1=\lambda_1+2m > \beta_2 =\lambda_2+2m-1> \cdots >\lambda_{2m+1} \},$
  differ by one. If the number of odd entries in the $\beta$-sequence is one more than the number of even entries, then
  its 2-core  is $\{1\}$,
  whereas   if the number of even entries in the $\beta$-sequence is one more than the number of odd entries, then
its 2-core is empty. In either case, let
  $\{  \underline{\lambda}^0, \underline{\lambda}^1 \}$ be the 2-quotient of $\underline{\lambda} $. Then,

\[ \tag{a} {\mathcal S}_{\underline{\lambda}}(X) =  
  \epsilon ( \underline{\lambda})  \cdot x \cdot
           {\mathcal S}_{\underline{\lambda}^0}(\underline{X}^2) {\mathcal S}_{\underline{\lambda}^1}(\underline{X}^2, x^2) \]
if the number of odd entries in  the $\beta$-sequence is one more than the number of even entries; whereas 
\[ \tag{b} {\mathcal S}_{\underline{\lambda}}(X) =  
  \epsilon ( \underline{\lambda})  
           {\mathcal S}_{\underline{\lambda}^1}(\underline{X}^2) {\mathcal S}_{\underline{\lambda}^0}(\underline{X}^2, x^2) \]
 if the number of even entries in  the $\beta$-sequence is one more than the number of odd entries.
Here $   {\mathcal S}_{\underline{\lambda}^0}$ and ${\mathcal S}_{\underline{\lambda}^1}$ are the characters of the corresponding
  highest weight modules of $\GL_m(\C)$, and $\GL_{m+1}(\C)$ in case (a), and of $\GL_{m+1}(\C)$, and $\GL_{m}(\C)$ in case (b). 
  \end{thm}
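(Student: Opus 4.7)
The plan is to mimic the proof of Theorem \ref{P-L} by applying the Weyl character formula and performing column operations on the $(2m+1)\times(2m+1)$ determinants in the numerator and denominator. Write $\mathcal{S}_{\underline{\lambda}}(X) = N(X)/D(X)$, where $N$ has entries $y_k^{\beta_j}$ with $y=(x_1,\ldots,x_m,-x_1,\ldots,-x_m,x)$ and $\beta_j = \lambda_j + 2m+1-j$. First I would add the first $m$ columns to the next $m$ columns: for each row $j$ with $\beta_j$ odd, the middle $m$ entries become $x_k^{\beta_j}+(-x_k)^{\beta_j}=0$, while for even $\beta_j$ they double. Then subtracting half of the new middle $m$ columns from the first $m$ columns zeroes out the first $m$ entries of rows with $\beta_j$ even. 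The resulting matrix has the block structure
\[
\begin{pmatrix} A & 0 & B \\ 0 & C & D \end{pmatrix}
\]
after permuting rows so that odd-$\beta_j$ rows come first, where $A$ (resp.\ $C$) collects $x_k^{\beta_j}$ for odd (resp.\ even) $\beta_j$, and the last column $(B,D)$ has entries $x^{\beta_j}$.

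Next I would analyze when this determinant is nonzero. In any contributing permutation the final column must be matched with either an odd row or an even row, forcing either $k=\#\{\beta_j\text{ odd}\}=m+1$ (case (a), so $B$ completes $A$ to a square $(m+1)\times(m+1)$ block) or $\ell=\#\{\beta_j\text{ even}\}=m+1$ (case (b), so $D$ completes $C$). By Proposition~\ref{prop:2-core}, these two cases correspond exactly to $c_2(\underline{\lambda})=\{1\}$ and $c_2(\underline{\lambda})=\emptyset$ respectively. The determinant then factors as $\pm\det[A\,|\,B]\cdot\det(C)$ in case (a), and analogously in case (b), where the sign is exactly the shuffle permutation sign from Definition~\ref{shuffle}, combined with the sign $s_0$ that bunches even and odd indices together.

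In case (a), the columns of the $(m+1)\times(m+1)$ block $[A\,|\,B]$ all carry odd exponents $\gamma_i=2\mu_i+1$, so a factor of $x_1\cdots x_m\cdot x$ pulls out and one is left with the Vandermonde-like matrix $(y_k^{\mu_i})$ in the variables $y_k=x_k^2$, $y_{m+1}=x^2$. Its determinant divided by the $\GL_{m+1}$ Vandermonde is $\mathcal{S}_{\underline{\lambda}^1}(\underline{X}^2,x^2)$, the $\mu_i$ being the $\beta$-numbers of $\underline{\lambda}^1$. Similarly, $\det C$ with even exponents $\delta_i=2\nu_i$ gives $V_m(y)\cdot\mathcal{S}_{\underline{\lambda}^0}(\underline{X}^2)$. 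The same column operations applied to the denominator $D(X)$ fall into case (b) (since $\lambda=0$ has $m+1$ even and $m$ odd $\beta$'s), producing $\pm 2^m\, x_1\cdots x_m\cdot V_m(y)V_{m+1}(y)$, which can also be read off directly by factoring the Vandermonde $\prod_{i<j}(y_i-y_j)$ at the pattern $(\underline{X},-\underline{X},x)$. Taking the ratio, the Vandermonde factors cancel, the $2^m$ cancels (doubling in numerator), the $x_1\cdots x_m$ cancels, and one lone factor of $x$ survives, yielding formula (a). Case (b) runs identically with the roles of $\underline{\lambda}^0$ and $\underline{\lambda}^1$ swapped and with no residual factor of $x$, since in that case no factor of $x$ was pulled out of the last column.

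The main obstacle will be the careful sign bookkeeping: one needs to verify that the combined sign from (i) permuting rows to achieve the block structure in the numerator, (ii) the corresponding operation in the denominator, and (iii) the $s_0$ that normalizes the Schur quotients, assembles to exactly $\epsilon(\underline{\lambda})$ as defined in Definition~\ref{shuffle}(b) (including the $(-1)^m$ that appears in the odd case). This is essentially a parity count of the shuffle that sorts the $\beta$-sequence by residue mod $2$ while preserving order within each residue class, and matches the sign in Theorem~\ref{P-L} with the extra $(-1)^m$ contributed by the last $x_{2m+1}$-column.
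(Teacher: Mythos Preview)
Your proposal is correct and is precisely the approach the paper intends: the paper does not actually supply a proof of Theorem~\ref{P2}, stating only that it ``is proved as the previous theorem by manipulations with the explicit character formula for $\GL_n(\C)$ as quotients of two determinants; we will not give a proof of this theorem.'' Your outline is a faithful and accurate elaboration of that sketch, carrying out the same column operations as in the proof of Theorem~\ref{P-L}, correctly identifying that nonvanishing forces $|k-\ell|=1$, and matching the two cases to the 2-core via Proposition~\ref{prop:2-core}; the one point you rightly flag as delicate---the sign bookkeeping producing $\epsilon(\underline{\lambda})$ as in Definition~\ref{shuffle}(b), including the extra $(-1)^m$---is indeed the only place requiring care, but your description of it as the ratio of shuffle signs in numerator and denominator is the correct mechanism.
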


The following theorem is due to Littlewood \cite{Li1}, pages 143-146,   for even symmetric groups. Forms of this theorem seem to be available in the literature more generally for arbitrary
wreath product $(\Z/p)^n \rtimes \Sym_n$ contained in $\Sym_{pn}$,
such as in Theorem 4.56 of \cite{Ro} which also follows from  Theorem 2 of  \cite{P}
and the corresponding analogue of Theorem \ref {P2} for general $p$.

\begin{thm} \label{little} Let $\p \rightarrow \pi(\p)$ be the natural correspondence between
  partitions/Young diagram  and irreducible representations of the symmetric group $\Sym_{|\p|}$.  Then
  if the representation $\pi(\p)$ is to have nonzero character at some element $w$ in $\Sym_{|\p|}$
   which is a product of disjoint even cycles together with at most one fixed point,  the 2-core
   of $\p$ must be empty if $|\p|$ is even, and 
   2-core must be 1 if $|\p|$ is odd.
   Further, if $w=w_1\cdots w_d$, product of disjoint cycles $w_i$ of lengths
  $2 \ell_i$ together with a cycle of length $\leq 1$, then,
  $$\Theta(\pi(\p))(w) = \epsilon(\p) \Theta(\pi(\p_0) \times \pi(\p_1))(w'),$$
  where 
  $w' =w_1'\cdots w_d'$ is an element in $\Sym_n$, $n = [|\p|/2]$ which is a product of disjoint cycles of length $\ell_i$, and where we have used the notation  $\pi(\p_0) \times \pi(\p_1)$ for the
  representation (usually reducible) of $\Sym_{n}$
  $$ \pi(\p_0) \times \pi(\p_1) = {\rm Ind}_{\Sym_{|\p_0|} \times \Sym_{|\p_1|}}^{\Sym_n} (\pi(\p_0) \boxtimes \pi(\p_1)).$$
\end{thm}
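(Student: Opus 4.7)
The strategy is to evaluate the Schur function $\mathcal{S}_{\p}$ at a carefully chosen diagonal matrix in two ways --- via the Frobenius character formula (Proposition~\ref{Frob-ch}) on one side, and via the factorization identity of Theorem~\ref{P-L} or Theorem~\ref{P2} on the other --- and to read off the character identity by matching coefficients in the resulting power-sum expansions.

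For $|\p| = 2n$, I substitute $X = (\underline{X}, -\underline{X})$. Since $p_k(X) = 2\, p_{k/2}(\underline{X}^2)$ for $k$ even and $p_k(X) = 0$ for $k$ odd, the Frobenius expansion collapses to
\[
\mathcal{S}_{\p}(X) \;=\; \sum_{\mu \vdash n} \frac{2^{d(\mu)}\, \Theta_{\p}(c_{2\mu})}{|Z_{\Sym_{2n}}(c_{2\mu})|}\, p_\mu(\underline{X}^2),
\]
where $2\mu$ denotes the partition obtained from $\mu$ by doubling each part and $d(\mu)$ is the number of parts of $\mu$. If the 2-core of $\p$ is nonempty, Theorem~\ref{P-L} forces the left-hand side to vanish identically, so by linear independence of the $p_\mu(\underline{X}^2)$ (after taking enough variables) each $\Theta_{\p}(c_{2\mu}) = 0$, giving the vanishing claim. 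Otherwise Theorem~\ref{P-L} gives $\mathcal{S}_{\p}(X) = \epsilon(\p)\, \mathcal{S}_{\p_0}(\underline{X}^2)\, \mathcal{S}_{\p_1}(\underline{X}^2)$, and since $\Psi(\pi(\p_0) \times \pi(\p_1)) = \mathcal{S}_{\p_0}\, \mathcal{S}_{\p_1}$ in $\Lambda$, a second application of Frobenius expands the product on the right as $\sum_{\mu \vdash n} |Z_{\Sym_n}(c_\mu)|^{-1}\, \Theta_{\pi(\p_0) \times \pi(\p_1)}(c_\mu)\, p_\mu(\underline{X}^2)$. Matching coefficients, together with the elementary identity $|Z_{\Sym_{2n}}(c_{2\mu})| = 2^{d(\mu)}\, |Z_{\Sym_n}(c_\mu)|$ (from the usual $\prod_i i^{m_i} m_i!$ formula), yields $\Theta_{\p}(c_{2\mu}) = \epsilon(\p)\, \Theta_{\pi(\p_0) \times \pi(\p_1)}(c_\mu)$, which is the even case of the theorem.

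For $|\p| = 2n+1$, I substitute $X = (\underline{X}, -\underline{X}, x)$ and extract the coefficient of $x^1$. Since $p_k(X) = 2 p_{k/2}(\underline{X}^2) + x^k$ for $k$ even and $p_k(X) = x^k$ for $k$ odd, a cycle type $\rho$ of $2n+1$ contributes to the $x^1$-coefficient of $p_\rho(X)$ precisely when $\rho = (2\mu, 1)$ for some $\mu \vdash n$, and then with contribution $2^{d(\mu)} p_\mu(\underline{X}^2)$. On the right-hand side of Theorem~\ref{P2}(a), the explicit $x$ multiplies $\mathcal{S}_{\p_0}(\underline{X}^2)\, \mathcal{S}_{\p_1}(\underline{X}^2, x^2)$, which is a polynomial in $x^2$; hence the $x^1$-coefficient is $\epsilon(\p)\, \mathcal{S}_{\p_0}(\underline{X}^2)\, \mathcal{S}_{\p_1}(\underline{X}^2)$ (set $x = 0$ in the second Schur factor, using the stability $\mathcal{S}_{\p_1}(\underline{X}^2, 0) = \mathcal{S}_{\p_1}(\underline{X}^2)$). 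Expanding this product by Frobenius as in the even case, matching coefficients of $p_\mu(\underline{X}^2)$, and using $|Z_{\Sym_{2n+1}}(c_{(2\mu, 1)})| = 2^{d(\mu)}\, |Z_{\Sym_n}(c_\mu)|$ delivers $\Theta_{\p}(c_{(2\mu, 1)}) = \epsilon(\p)\, \Theta_{\pi(\p_0) \times \pi(\p_1)}(c_\mu)$. The vanishing claim in the odd case comes from the cases of Theorem~\ref{P2} in which $\mathcal{S}_{\p}(X) \equiv 0$, which by Proposition~\ref{prop:2-core} is exactly when the 2-core of $\p$ is not $(1)$.

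The real delicacy of the proof is the odd-case bookkeeping: one must notice that the explicit $x$ in Theorem~\ref{P2}(a), combined with the fact that $\mathcal{S}_{\p_1}(\underline{X}^2, x^2)$ is a polynomial in $x^2$ only, makes the $x^1$-extraction land cleanly on $\mathcal{S}_{\p_0}(\underline{X}^2)\, \mathcal{S}_{\p_1}(\underline{X}^2)$; and that on the Frobenius side only cycle types of the form $(2\mu, 1)$ can possibly supply an $x^1$ term. Once these two alignments are in place, the odd case becomes structurally identical to the even case, and the theorem follows in both parities.
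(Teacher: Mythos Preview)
Your proof is correct and follows essentially the same route as the paper's: evaluate the Frobenius expansion of $\mathcal{S}_{\p}$ at $(\underline{X},-\underline{X})$ or $(\underline{X},-\underline{X},x)$, factor the left side via Theorem~\ref{P-L} or Theorem~\ref{P2}, and match power-sum coefficients using the centralizer identity $|Z(c_{2\mu})|=2^{d(\mu)}|Z(c_{\mu})|$. The only cosmetic differences are that in the odd case the paper divides by $x$ and sets $x=0$ (equivalent to your $x^{1}$-extraction) and rules out case~(b) of Theorem~\ref{P2} by a direct parity observation on $\mathcal{S}_{\underline{\lambda}}(X)$ rather than by appealing to Proposition~\ref{prop:2-core}.
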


\begin{proof}

  Recall once again the notation that if
$\underline{\lambda} = \{\lambda_1 \geq \lambda_2 \geq \cdots \geq \lambda_m \}$ is a
  partition of $k$, it defines an irreducible representation of $\Sym_k$, say $\pi_{\underline{\lambda}}$ with character $\Theta_{\underline{\lambda}}$,
  and defines at the same time an irreducible
  representation of $\GL_d(\C)$ for all $d \geq m$ of highest weight
  $\{\lambda_1 \geq \lambda_2 \geq \cdots \geq \lambda_m \geq 0 \geq \cdots \geq 0 \}$,
  with its character, the Schur function
$\mathcal{S}_{\underline{\lambda}}$, and under the isomorphism $\Psi: R \rightarrow \Lambda$,
\[  \Psi(\pi_{\underline{\lambda}}) = {\mathcal S}_{\underline{\lambda}}. \]

By Proposition \ref{Frob-ch},
\[ \tag{1} {\mathcal S}_{\underline{\lambda}} = \sum _{\rho} \frac{\Theta_{\underline{\lambda}} (c_\rho)}{|Z(c_\rho)|} p_\rho ,\]
where $\rho = (\rho_1,\rho_2, \cdots )$ is a partition of $k$, defining a conjugacy class $c_\rho$ in $\Sym_k$, whose centralizer
in $\Sym_k$ is $Z(c_\rho)$ of order  $|Z(c_\rho)|$; the symmetric function $p_\rho $ is the product of symmetric polynomials
$p_{\rho_i}= X_1^{\rho_i}+X_2^{\rho_i} + X_3^{\rho_i}+ \cdots$

In the standard notation, if a conjugacy class $c$ in $\Sym_k$ is $(1^{i_1},2^{i_2}, \cdots, k^{i_k})$, the order of the centralizer of any element in $c$ is:
$$ (i_1)! 2^{i_2} (i_2)! \cdots k^{i_k} (i_k)!.$$

For a partition $\rho = (\rho_1,\rho_2, \cdots )$ of $k$,
we will be using the notation $2\rho$ for the partition   $2\rho = (2\rho_1, 2\rho_2, \cdots )$ of $2k$, defining a conjugacy class
$c_{2\rho}$ in $\Sym_{2k}$ for which 
the order of the centralizer of any element in $c_{2\rho}$ is:
$$  2^{i_1} (i_1)! 4^{i_2} (i_2)! \cdots (2k)^{i_k} (i_k)!,$$
therefore,
\[ \tag{2}  |Z(c_{2\rho})| = 2^p  |Z(c_\rho)|, \]
where $p = i_1+i_2+\cdots + i_k$.

Now we split the proof of the theorem in two cases.

  \noindent{\bf Case 1:} $|\p| = 2n$.  

We will use the identity expressed by equation (1) at the diagonal matrices  in $\GL_{2m}(\C)$ of the form
$X=(x_1,x_2,\cdots, x_m, -x_1, -x_2, \cdots, -x_m) = (\underline{X},-\underline{X})$. 
An important
observation is that  $p_\rho $ which is the product of the symmetric polynomials
$p_{\rho_i}= (X_1^{\rho_i}+X_2^{\rho_i} + X_2^{\rho_i}+ \cdots)$ must be identically zero on such elements unless all the entries in $\rho$ are even.

Using Theorem \ref{P-L}, we write equation (1) as:
\[ \epsilon ( \underline{\lambda})
 {\mathcal S}_{\underline{\lambda}^0}(\underline{X}^2) {\mathcal S}_{\underline{\lambda}^1}(\underline{X}^2)
= \sum _{\rho} \frac{\Theta_{\underline{\lambda}} (c_{2\rho})}{|Z(c_{2\rho})|} p_{2\rho}(X) .\]
Since $p_{2\rho}(X) = 2^p p_{\rho}(\underline{X}^2)$, where $p = i_1+i_2+\cdots + i_n$, we can rewrite this equation using (2) as:
\[ \tag{3} \epsilon ( \underline{\lambda})
 {\mathcal S}_{\underline{\lambda}^0}(\underline{X}) {\mathcal S}_{\underline{\lambda}^1}(\underline{X})
= \sum _{\rho} \frac{\Theta_{\underline{\lambda}} (c_{2\rho})}{|Z(c_{\rho})|} p_{\rho}(\underline{X}) .\]

Since $\Psi: R \rightarrow \Lambda$ is an isomorphism of rings,  the element
${\mathcal S}_{\underline{\lambda}^0}(\underline{X}) {\mathcal S}_{\underline{\lambda}^1}(\underline{X})$ of $\Lambda$ arises from the image under $\Psi$ of
the representation of $\Sym_n$ that we have called $\pi(\p_0) \times \pi(\p_1)$ in the statement of Theorem \ref{little},
therefore, we have,
\[ \tag{4}  \epsilon ( \underline{\lambda})\sum _{\rho} \frac{\Theta_{   \pi(\p_0) \times \pi(\p_1)   } (c_\rho)}{|Z(c_\rho)|} p_\rho(\underline{X}) = \sum _{\rho} \frac{\Theta_{\underline{\lambda}} (c_{2\rho})}{|Z(c_{\rho})|} p_{\rho}(\underline{X}) .\]
(By linearity, the form of Frobenius theorem on characters in Proposition \ref{Frob-ch}
in equation (1) is valid for all representations -- not necesarily irreducible -- of symmetric groups.)

Since the polynomials $ p_{\rho}(\underline{X})$ are linearly independent, we can equate the coefficients  of $ p_{\rho}(\underline{X})$ on the two sides of equation (4) to  prove the theorem when $|\p|$ is even. By Theorem \ref{P-L}, if $\underline{\lambda}$
has non-empty 2-core, then $  {\mathcal S}_{\underline{\lambda}}$ is identically zero on the set of diagonal elements
of the form $X=(x_1,x_2,\cdots, x_m, -x_1, -x_2, \cdots, -x_m) =  (\underline{X}, -\underline{X})$. By the linear independence
of  the polynomials $ p_{\rho}(\underline{X})$, we deduce that $\Theta_{\underline{\lambda}} (c_{2\rho}) \equiv 0$.

  \vspace{5mm}
  
  \noindent{\bf Case 2:} $|\p| = 2n+1$.
  
  In this case, we will use Frobenius character relationship contained in Proposition \ref{Frob-ch}:
  \[ \tag{5} {\mathcal S}_{\underline{\lambda}} = \sum _{\rho} \frac{\Theta_{\underline{\lambda}} (c_\rho)}{|Z(c_\rho)|} p_\rho ,\]
  at diagonal elements of $\GL_{2m+1}(\C)$ of the form:
  $$X=(x_1,x_2,\cdots, x_m, -x_1, -x_2, \cdots, -x_m,x) =  (\underline{X}, -\underline{X}, x), $$
  with $x_i, x \in \C^\times$ arbitrary.
  Observe that if $\ell$ is an odd integer, then for $X$ as above, $p_\ell(X) = x^\ell$,
  whereas for $\ell$ an even integer $p_\ell(X) = 2p_\ell(\underline{X}) + x^{\ell}$. It follows that for any conjugacy class $\rho$ in
  $\Sym_{2n+1}$, $p_\rho$, and hence each term  in the right hand side of equation
  (5) is divisible by $x$, and all terms except those which correspond to those $\rho$ which are product of disjoint even cycles together with exactly
  one fixed point, contribute a term which is divisible by $x$ and no higher power, and all the other terms are divisible by higher
  powers of $x$. Furthermore, in the case $|\p| = 2n+1$, each of the term $p_\rho(X)/x$ is an even function of $x$, hence
  by equation (5),
  ${\mathcal S}_{\underline{\lambda}}(X)$ as a function of $x$ is an odd polynomial function of $x$. Thus, if ${\mathcal S}_{\underline{\lambda}}(X)$ is nonzero, we must be in case (a) of Theorem \ref{P2} (since in case (b) ${\mathcal S}_{\underline{\lambda}}(X)$ is an even function of $x$), therefore  if ${\mathcal S}_{\underline{\lambda}}(X)$ is a nonzero function of $x$,
  $\underline{\lambda}$ must have 2-core $\{1\}$. 

  By Theorem \ref{P2}, the left hand side of the equation (5) is also divisible by $x$ which  after dividing by $x$ gives,
   $\epsilon ( \underline{\lambda}) 
  {\mathcal S}_{\underline{\lambda}^0}(\underline{X}^2) {\mathcal S}_{\underline{\lambda}^1}(\underline{X}^2, x^2).$

  Thus after dividing both the sides of the Frobenius character relationship in equation (5) by $x$, and then putting $x=0$, we are in exactly the same
  situation as in the proof of the theorem for $\Sym_{2n}$, for which we do not need to repeat the previous argument.  
\end{proof}

  \section{The theorem}

The following theorem is the main  result of this paper and  is a simple consequence of  Theorem \ref{little} of the last section.

\begin{thm} \label{main} Let $G$ be either the symmetric group $\Sym_{2n}$ or the symmetric group $\Sym_{2n+1}$, and $H$ the hyperoctahedral group $B_n = (\Z/2)^n \rtimes \Sym_n$,
  sitting naturally inside $G$ as the centralizer of the involution $w_0= (1,-1)(2,-2)\cdots  (n, -n)$. Then there exists an injective map  
    $BC$ from the set of irreducible representations of $B_n$ to the set of
  irreducible representations of $G$, and a map $\Nm: X \rightarrow B_n$ defined and discussed in \S 2
  on a subset $X \subset G $ consisting of those elements in $G$
  which are disjoint product of even cycles and with at most one fixed point  such that,
  (using the notation $\Theta(V)(g)$ for the character of a representation $V$ of a group $G$ at an element $g \in G$)
    $$\Theta(BC(\pi))(w) = \epsilon(BC (\pi))
    \Theta(\pi)(\Nm w),$$
    for any irreducible representation $\pi$ of $B_n$, and $w \in X \subset G$.
        In particular,
    taking $w=w_0$, we have 
    $$\Theta(BC(\pi))(w_0) = \epsilon(BC (\pi))   \Theta(\pi)(1).$$
    
    Thus the character of an irreducible representation of $G$ at the element $w_0$ is, up to a sign, the  dimension of an irreducible representation of
    $B_n = (\Z/2)^n \rtimes \Sym_n$. Further, the image of the map $\pi \rightarrow BC(\pi)$ from the set of irreducible representations of $B_n$ to the set of
    irreducible representations of $G$ consist exactly of those representations of $G$ whose character at $w_0$ is nonzero.
\end{thm}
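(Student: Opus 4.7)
The plan is to derive Theorem \ref{main} as a direct consequence of Littlewood's theorem (Theorem \ref{little}) combined with a standard Clifford-theoretic computation in $B_n$. First I would spell out the definitions: $BC(\pi(\p_0, \p_1))$ is the irreducible representation $\pi(\p)$ of $G$ attached to the unique partition $\p$ of $2n$ (respectively $2n+1$) whose 2-core is $\emptyset$ (respectively $\{1\}$) and whose 2-quotient is $(\p_0, \p_1)$; since the pair (2-core, 2-quotient) determines $\p$ uniquely, $BC$ is injective. For $w \in X$ with cycle lengths $(2\ell_1, \ldots, 2\ell_d)$ (plus at most one fixed point), the norm $\Nm w$ lies in the $\Sym_n$-factor of $B_n$ as a permutation $w'$ of cycle type $(\ell_1, \ldots, \ell_d)$, and I set $\epsilon(BC(\pi(\p_0, \p_1))) := \epsilon(\p)$.

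Littlewood's theorem immediately gives
\[
\Theta(\pi(\p))(w) = \epsilon(\p)\, \Theta(\pi(\p_0) \times \pi(\p_1))(w'),
\]
so the main identity reduces to proving
\[
\Theta(\pi(\p_0, \p_1))(\sigma) = \Theta(\pi(\p_0) \times \pi(\p_1))(\sigma)
\]
for every $\sigma$ in $\Sym_n \subset B_n$. I would establish this by restricting the Clifford-theoretic construction $\pi(\p_0, \p_1) = \Ind_K^{B_n}(\chi \otimes \pi(\p_0) \boxtimes \pi(\p_1))$, where $K = (\Z/2)^n \rtimes (\Sym_{|\p_0|} \times \Sym_{|\p_1|})$ and $\chi$ is the appropriate character of $(\Z/2)^n$. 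Since $(\Z/2)^n \subset K$, we have $B_n = K \cdot \Sym_n$, so the double-coset space $K \backslash B_n / \Sym_n$ is trivial, and Mackey's formula gives
\[
\pi(\p_0, \p_1)|_{\Sym_n} = \Ind_{\Sym_{|\p_0|} \times \Sym_{|\p_1|}}^{\Sym_n}\!\bigl(\pi(\p_0) \boxtimes \pi(\p_1)\bigr) = \pi(\p_0) \times \pi(\p_1),
\]
using that $K \cap \Sym_n = \Sym_{|\p_0|} \times \Sym_{|\p_1|}$ and that $\chi$ restricts trivially to this intersection. Combined with Littlewood's theorem, this yields the character identity.

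The case $w = w_0$ is immediate because $w_0$ is a product of $n$ disjoint 2-cycles (with an additional fixed point when $G = \Sym_{2n+1}$), so $w' = 1 \in \Sym_n$, $\Nm w_0 = 1 \in B_n$, and $\Theta(\pi)(\Nm w_0) = \dim \pi$. The description of the image of $BC$ then follows at once: Littlewood's theorem forces the 2-core of $\p$ to be $\emptyset$ or $\{1\}$ whenever $\Theta(\pi(\p))(w_0) \neq 0$, and conversely for such $\p$ the special case yields $\Theta(\pi(\p))(w_0) = \epsilon(\p) \dim \pi(\p_0, \p_1) \neq 0$, since every irreducible representation has positive dimension. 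The only real content beyond Littlewood's theorem is the Clifford-theoretic restriction identity; this is the main obstacle, but it is a standard Mackey computation.
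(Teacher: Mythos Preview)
Your proposal is correct and follows essentially the same approach as the paper: reduce to Littlewood's theorem (Theorem \ref{little}) and then identify $\pi(\p_0,\p_1)|_{\Sym_n}$ with $\pi(\p_0)\times\pi(\p_1)$ via the Mackey restriction of $\Ind_K^{B_n}(\chi\otimes\pi(\p_0)\boxtimes\pi(\p_1))$ for $K=(\Z/2)^n\rtimes(\Sym_{|\p_0|}\times\Sym_{|\p_1|})$. If anything, your write-up is slightly more explicit than the paper's in justifying the Mackey step and in deducing the description of the image of $BC$.
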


\begin{proof}
  The proof of this theorem is a simple consequence of  Theorem \ref{little} of the last section.
Recall that    we have used the notation  $\pi(\p_0) \times \pi(\p_1)$ for the
  representation
  $$ \pi(\p_0) \times \pi(\p_1) = {\rm Ind}_{\Sym_{|\p_0|} \times \Sym_{|\p_1|}}^{\Sym_n} (\pi(\p_0) \boxtimes \pi(\p_1)).$$

  Of course the representation $\pi(\p_0) \times \pi(\p_1)$ of $\Sym_n$ is a complicated sum of irreducible representations for which there is the Littlewood-Richardson rule. However, this complication
  has no role to play for us since instead of $\Sym_n$ we are dealing with the larger group,
  $B_n = (\Z/2)^n \rtimes \Sym_n$ and the representation $\pi(\p_0) \times \pi(\p_1)$ of $\Sym_n$ is the restriction
  to $\Sym_n$ of a representation of $B_n$ that basechanges to the representation $\pi$ of $\Sym_{2n}$.

  Observe that the representation of   $B_n = (\Z/2)^n \rtimes \Sym_n$
  that we are dealing with is
  $$ {\rm Ind}_{A}^{B_n} (V),$$
  where $A$ is the subgroup of $B_n$ which is $A= (\Z/2)^n \rtimes ({\Sym_{|\p_0|} \times \Sym_{|\p_1|}})$,
  and $V$ is the representation of $A$ on which
  $$(\Z/2)^{n} = (\Z/2)^{|\p_0| +|\p_1|} = (\Z/2)^{|\p_0| } \times (\Z/2)^{|\p_1|}$$
  acts by the trivial character on the first factor   $(\Z/2)^{|\p_0| }$, and by the non-trivial
  character on each of the $\Z/2$ factors in  $(\Z/2)^{|\p_1|}$; the subgroup ${\Sym_{|\p_0|} \times \Sym_{|\p_1|}}$
  acts by $\pi(\p_0) \boxtimes \pi(\p_1)$.

  By standard application of Mackey theory (restriction to $\Sym_n$ of an induced representation
  of $B_n$), we find that 
  $$ {\rm Res}_{\Sym_n} ({\rm Ind}_{A}^{B_n} (V)) = {\rm Ind}_{\Sym_{|\p_0|} \times \Sym_{|\p_1|}}^{\Sym_n} (\pi(\p_0) \boxtimes \pi(\p_1)) =\pi(\p_0) \times \pi(\p_1) ,$$
proving the theorem.
  \end{proof}

  Here are a few samples that we first did using GAP software~\cite{GAP}.

  \vspace{5mm}
  
  \noindent{\bf Example 1:} The symmetric group $\Sym_{20}$ has 627 irreducible representations, and the character values $\Theta(w_0)$ is:
  \begin{enumerate}
  \item $>0$ for 227 representations,
  \item $<0$ for 254 representations,
  \item 0 for 146 representations.
  \end{enumerate}
  On the other hand, $B_{10}$ has $227+254 = 481$ irreducible representations, of dimensions exactly
  $|\Theta(w_0)|$.

  \vspace{5mm}
  
  \noindent{\bf Example 2:} The symmetric group $\Sym_{21}$ has 792 irreducible representations, and the character values $\Theta(w_0)$ is:
  \begin{enumerate}
  \item $>0$ for 252 representations,
  \item $<0$ for 229 representations,
  \item 0 for 311 representations.
  \end{enumerate}
  On the other hand, $B_{10}$ has $252+229 = 481$ irreducible representations, of dimensions exactly
  $|\Theta(w_0)|$.

\vspace{5mm}

\noindent{\bf Acknowledgement:}
The authors must thank ICTS-TIFR, Bengaluru for the invitation to the program
``Group Algebras, Representations and Computation'' in October 2019 which brought the two authors together to collaborate on this project. 
The second author thanks Dr Ashish Mishra for his tutorials on $p$-core and $p$-quotients, and in particular, for making
the paper \cite{Li2} available to him. The authors thank Arvind Ayyer for supplying the appendix proving Proposition \ref{sign2}.
\vskip 10pt

\newpage 

\appendix 

\section{Proof of Proposition~\ref{sign2} by Arvind Ayyer}
\label{sec:proof} 
%
%
%
%

Let $\la = (\la_1,\dots,\la_{n})$ be a partition of $n$, written as usual with $\la_1 \geq \cdots \geq \la_{n}$. We will think of $\la$ as a partition with $n$ parts by padding with zeros on the right if necessary. 
For example, the set of partitions of $3$ is $\{(3,0,0), (2,1,0), (1,1,1)\}$.
Then, the $\beta$-set of $\la$, is given by $\beta(\la) = (\beta_1, \dots, \beta_{n})$, where $\beta_i = \la_i + n - i$. 
Then we reformulate Proposition~\ref{prop:2-core} as follows.

\begin{prop}
\label{prop:beta-half}
\begin{enumerate}
\item The partition $\la = (\la_1,\dots,\la_{2n})$ of $2n$ has empty $2$-core if and only if there are exactly $n$ odd integers in $\beta(\la)$.
\item The partition $\la = (\la_1,\dots,\la_{2n-1})$ of $2n-1$ has $2$-core equal to $(1)$ if and only if there are exactly $n$ odd integers in $\beta(\la)$.
\end{enumerate}
\end{prop}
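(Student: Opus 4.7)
The plan is to derive Proposition~\ref{prop:beta-half} as a direct specialization of Proposition~\ref{prop:2-core}, exploiting the fact that once we fix the number of parts via the padding convention, the total $k+\ell$ is determined, and the parity constraints in Proposition~\ref{prop:2-core} then pin down $\ell$ exactly.

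First I would note that by definition $\beta(\la)$ has exactly as many entries as the number of parts of $\la$, so for part (1) the padding to $2n$ parts forces $k+\ell = 2n$, where $k$ and $\ell$ denote the number of even and odd entries of $\beta(\la)$ respectively. By Proposition~\ref{prop:2-core}, $\la$ has empty $2$-core iff $k=\ell$ or $k=\ell+1$. The second alternative would force $k+\ell$ odd, contradicting $k+\ell=2n$; hence empty $2$-core is equivalent to $k=\ell$, i.e.\ $\ell=n$. Conversely, $\ell=n$ combined with $k+\ell=2n$ gives $k=\ell$, so Proposition~\ref{prop:2-core} delivers empty $2$-core.

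For part (2), the padding to $2n-1$ parts gives $k+\ell = 2n-1$, and Proposition~\ref{prop:2-core} says $\la$ has $2$-core equal to $(1)$ iff $k+1=\ell$ or $k=\ell+2$. The latter would force $k+\ell$ even, contradicting $k+\ell=2n-1$; so the only possibility is $k+1=\ell$, which together with $k+\ell=2n-1$ yields $\ell = n$, $k=n-1$. Conversely, $\ell=n$ and $k=n-1$ satisfy $k+1=\ell$, so the $2$-core equals $(1)$ by Proposition~\ref{prop:2-core}.

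I do not foresee a real obstacle: the content is just a parity-driven reformulation. The one point deserving care is the compatibility of conventions, namely that padding $\la$ with zeros so that the number of parts agrees in parity with $|\la|$ is precisely the convention under which the last sentence of Proposition~\ref{prop:2-core} applies, so that the alternatives $k=\ell+1$ in part (1) and $k=\ell+2$ in part (2) are correctly excluded by parity and the sharper equalities $k=\ell$, $k+1=\ell$ are selected.
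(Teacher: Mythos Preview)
Your argument is correct and is precisely the reasoning the paper intends: the appendix presents Proposition~\ref{prop:beta-half} merely as a ``reformulation'' of Proposition~\ref{prop:2-core}, and the last paragraph of Proposition~\ref{prop:2-core} already records exactly the parity reductions you spell out (fixing the number of parts to have the same parity as $|\la|$ eliminates the alternatives $k=\ell+1$ and $k=\ell+2$). There is nothing further to add.
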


Recall that, given a partition $\la$, the {\em shuffle permutation} in Definition~\ref{shuffle} is the element $\pi \in \Sym_{n}$ mapping odd (resp. even) elements of $\beta(\la)$ to odd (resp. even) elements of $(n-1,\dots,1,0)$ preserving their order. (Observe that any totally ordered set of $n$ elements is canonically isomorphic to the set $\{1,2,\cdots ,n\}$, and therefore a bijection between two totally ordered sets of order
$n$  gives rise to an element of $\Sym_n$.)

The sign of the shuffle permutation is denoted $\epsilon(\la) = \sgn(\pi)$. 
It is easy to see that if $\la$ is a partition of $2n$ (resp. $2n-1$), then $\la$ has an even (resp. odd) number of odd parts. 
The restatement of Proposition~\ref{sign2} is then the following.

\begin{prop}
\begin{enumerate}
\item For $\la \vdash 2n$ with empty $2$-core, let $2k$ be the number of odd parts in $\la$. Then $\epsilon(\la) = (-1)^k$.
\item For $\la \vdash 2n-1$ with $2$-core equal to $(1)$, let $2k-1$ be the number of odd parts in $\la$. Then $\epsilon(\la) = (-1)^{k+n-1}$.
\end{enumerate}
\end{prop}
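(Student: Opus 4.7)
The plan is to compute the sign of the shuffle permutation directly via an inversion count, and then convert this into a statement about the number of odd parts of $\lambda$ using the defining relation $\beta_i = \lambda_i + (r - i)$ between $\beta$-numbers and parts of the partition (where $r$ is the total number of parts after padding).

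First I would describe the shuffle permutation $\pi$ by its action on the parity sequence of $\beta(\lambda)$: viewed as a string over $\{O, E\}$ of length $r$ obtained by reading the $\beta$-numbers in decreasing order, $\pi$ sorts it into the target parity pattern of $X_r$, preserving the relative order within each parity class. Since no inversions occur within the odd-block or within the even-block, all inversions of $\pi$ arise from crossings between an odd $\beta_i$ and an even $\beta_j$. A short calculation yields $\mathrm{sgn}(\pi) = (-1)^{\sum_j (a_j - (2j-1))}$, where $a_1 < a_2 < \cdots$ are the positions, in the decreasing enumeration of $\beta(\lambda)$, of the odd $\beta$-numbers.

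Next I would translate this sum modulo $2$ into parity counts of parts of $\lambda$. Introduce the four counts $N_{XY}$ of positions $i$ with $\lambda_i \equiv X$ and $i \equiv Y \pmod 2$. Four linear constraints --- total equal to $r$; number of odd $i$ equal to $\lceil r/2 \rceil$; number of odd $\lambda_i$ equal to the prescribed count of odd parts; and number of odd $\beta_i$ as specified by Proposition~\ref{prop:beta-half} --- uniquely determine $N_{OO}, N_{OE}, N_{EO}, N_{EE}$ as explicit linear functions of $k$ and $n$. A brief calculation shows that $\sum_j a_j$ modulo $2$ coincides with a single such count (the one indexed by odd $i$), since every contribution from even values of $i$ is itself an even number. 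Substituting the solved values yields the claimed sign.

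The even case is essentially immediate: one finds $N_{OO} = k$, and the arithmetic collapses to $\mathrm{sgn}(\pi) = (-1)^k$. The odd case is more delicate because the target set $X_{2m+1}$ is imbalanced, having $m+1$ odd and $m$ even elements, which shifts both the target sum $\sum_j (2j-1)$ and the linear constraints on $N_{XY}$; furthermore Definition~\ref{shuffle}(b) contains the factor $(-1)^m$ tied to the choice of padding. Tracking these corrections carefully, using the appendix's convention of exactly $2n-1$ parts (so $m = n-1$), produces the formula with its explicit dependence on $n$. The main obstacle I foresee is precisely this bookkeeping in the odd case: once the parity dictionary between $\beta$ and $\lambda$ is set up, the remaining manipulations are routine modular arithmetic.
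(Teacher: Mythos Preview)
Your proposal is correct and follows essentially the same route as the paper's proof: both reduce the shuffle sign to the parity of the sum of the positions $o_1,\dots,o_n$ of the odd $\beta$-numbers (you via an inversion count giving exponent $\sum_j(a_j-(2j-1))$, the paper via a factorization $\epsilon_1\epsilon_2$ computed with wedge products), and then convert this to a statement about the joint parities of $\lambda_i$ and $i$, which the paper carries out by directly counting the set $S=\{i:\beta_i\text{ odd},\ \lambda_i\text{ odd}\}$ and showing $|S|=k$, rather than by solving your $4\times 4$ linear system in the $N_{XY}$. The odd case in the paper is handled by the identical argument with the expected index shifts, exactly as you anticipate.
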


\begin{proof}
We begin with the proof of case (1).
By Proposition~\ref{prop:beta-half}, exactly $n$ of these $\beta_i$'s are odd.
Let the integers $i$ such that $\beta_i$ is odd be labeled $o_1 < \dots < o_n$, and the integers
$i$ such that $\beta_i$ is even be labeled $e_1 < \dots < e_n$.

Before computing the sign of the shuffle permutation, we first compute the sign
$\epsilon_1$ of the permutation that moves $o_1 < \dots < o_n$
to $1< \dots< n$ (preserving their order), and moves $e_1 < \dots < e_n$
to $(n+1)<  \dots<  2n$ (preserving their order). This can be calculated by looking at the corresponding
action on a vector space $V$ of dimension $2n$ with basis vectors $\{v_1,v_2, \cdots, v_{2n}\}$.
Thus we find that $\epsilon_1$  is given by:
\[ v_1 \wedge v_2 \wedge \cdots \wedge v_n \wedge v_{n+1} \wedge \cdots \wedge v_{2n} = \epsilon_1
v_{o_1} \wedge v_{o_2} \wedge \cdots \wedge v_{o_n} \wedge v_{e_{1}} \wedge \cdots \wedge v_{e_{n}},\]
which means that $\epsilon_1$ has the parity of:
\[(n-e_1+1) + (n-e_2+2) + \cdots + (n-e_n+n) = (o_1+ \cdots + o_n) - (1+ \cdots + n).\]
By the computation just done, if $\epsilon_2$ is
the sign of the permutation that moves all the odd integers $ 1 \leq i \leq (2n-1)$
to positions $1, \dots, n$ (preserving their order), and even integers $ 2 \leq i \leq 2n$
to positions $n+1, \dots, 2n$ (preserving their order), then $\epsilon_2$ is given by:
\[ = (o_1+ \cdots +o_n) - (1+ \cdots + n) = (1 + 3 + \cdots + 2n-1) - (1+ \cdots + n).\]
Therefore, we find that $\epsilon(\p) = \epsilon_1 \epsilon_2$ has the same parity as $(o_1+ \cdots + o_n) +n$.  Thus, we have to prove that the parity of $o_1 + \cdots + o_n + n$ is the same as that of $k$.
Now, since $\beta_{o_i} = \la_{o_i} + 2n - o_i$ is odd, so is $\la_{o_i} - o_i$. Thus, the parity of $o_i + 1$ is the same as that of $\la_{o_i}$.
As a result, we have to show that the parity of $\la_{o_1} + \cdots + \la_{o_n}$ is the same as that of $k$. Among all $\la_{o_i}$'s, those which are even clearly do not contribute to the parity, and those which are odd contribute a $1$. We thus have to prove that the cardinality of the set
$S = \{ i \mid \la_{o_i} \text{is odd} \}$ has the same parity as that of $k$.

We will prove something stronger, namely that $|S| = k$. Suppose that $|S| = j$. Partition the set $\{1,\dots,2n\} = O \cup E$, where $O = \{o_1,\dots,o_n\}$. By assumption, $S \subset O$ are the positions where $\la$ takes odd values. 
For convenience, let $\delta_i = 2n-i$ for $1 \leq i \leq 2n$ so that $\beta_i = \la_i + \delta_i$. 
There are exactly $n$ even and $n$ odd $\delta$'s. Since $\beta_i$ for $i \in O$ is odd, 
$\delta_i$'s for $i \in S$ are even and $\delta_i$'s for $i \in O \setminus S$ are odd. Which means that there are $j$ even and $n-j$ odd $\delta_i$'s for $i \in O$.
Consequently, we must have $n-j$ even and $j$ odd $\delta_i$'s for $i \in E$.
Combining all this information, we infer that the number of odd parts in $\la$ is $j$ (from $S$) and $j$ (from $E$, since $\beta_i$'s for $i \in E$ are even).
But we had assumed that $\la$ has $2k$ odd parts, and therefore $j = k$, proving first case of the Proposition.

Now we consider the second case of the Proposition where the length of the partition $\la$ is $2n-1$.
In this case, arguing just as in case (1), we have to prove that the parity of $o_1 + \cdots + o_n$ is the same as that of $k+1$.

Now, since $\beta_{o_i} = \la_{o_i} + 2n - 1 - o_i$ is odd, so is $\la_{o_i} - o_i - 1$. Thus, the parity of $o_i$ is the same as that of $\la_{o_i}$.
As a result, we have to show that the parity of $\la_{o_1} + \cdots + \la_{o_n}$ is the same as that of $k+1$. Among all $\la_{o_i}$'s, those which are even clearly do not contribute to the parity, and those which are odd contribute a $1$. We thus have to prove that the cardinality of the set
$S = \{ i \mid \la_{o_i} \text{is odd} \}$ has the same parity as that of $k + 1$.

We will again prove something stronger, namely that $|S| = k+1$. Suppose that $|S| = j$. Partition the set $\{1,\dots,2n-1\} = O \cup E$, where $O = \{o_1,\dots,o_n\}$. By assumption, $S \subset O$ are the positions where $\la$ takes odd values. 
For convenience, let $\delta_i = 2n-1-i$ for $1 \leq i \leq 2n-1$ so that $\beta_i = \la_i + \delta_i$. 
There are $n$ even and $n-1$ odd $\delta_i$'s. 

Since $\beta_i$ for $i \in O$ is odd, 
$\delta_i$'s for $i \in S$ are even and $\delta_i$'s for $i \in O \setminus S$ are odd. Which means that there are $j$ even and $n-j$ odd $\delta_i$'s for $i \in O$.
Consequently, we must have $n-j$ even and $j-1$ odd $\delta_i$'s for $i \in E$.
Combining all this information, we infer that the number of odd parts in $\la$ is $j$ (from $S$) and $j-1$ (from $E$, since $\beta_i$'s for $i \in E$ are even), giving a total of $2j-1$.
But we had assumed that $\la$ has $2k+1$ odd parts, and therefore $j = k+1$.

This completes the proof of both the cases of the Proposition.
\end{proof}

\end{document}